\def\cvd{~\vbox{\hrule\hbox{%
     \vrule height1.3ex\hskip0.8ex\vrule}\hrule } }
\newcommand{\Names}{Mackenzie Cox, Weston M. Grewe, Grace K. Hochrein, Linda J. Patton, Ilya M. Spitkovsky}
\newcommand{\Title}{Non-parallel Flat Portions on the Boundaries \\ of Numerical Ranges of 4-by-4 Nilpotent Matrices}
\newtheorem{thm}[theorem]{Theorem}
\newtheorem{remark}[theorem]{Remark}
\newtheorem{example}[theorem]{Example}
\newtheorem{prop}[theorem]{Proposition}
\newtheorem{lem}[theorem]{Lemma}
\newcommand{\tr}{\operatorname{tr}}
  \newcommand{\re}{\operatorname{Re}}
  \newcommand{\im}{\mathrm{Im}}
\begin{document}

\setcounter{page}{1}

\thispagestyle{empty}

 \title{\Title\thanks{The first four authors were supported by the Bill and Linda Frost Fund. The fifth author was supported in part by Faculty Research funding from the Division of Science and Mathematics, NYUAD.}}

\author{
	Mackenzie Cox\thanks{Mathematics Department,
		California Polytechnic State University, San Luis Obispo, CA 93402, USA
		(readmac@gmail.com)}
	\and
	Weston M. Grewe\thanks{Mathematics Department,
		California Polytechnic State University, San Luis Obispo, CA 93402, USA
		(we\nolinebreak ston.grewe@ucdenver.edu)}
	\and
	Grace K. Hochrein\thanks{Mathematics Department,
		California Polytechnic State University, San Luis Obispo, CA 93402, USA
		(ghochrei@umich.edu)}
	\and
	Linda J. Patton\thanks{Mathematics Department,
		California Polytechnic State University, San Luis Obispo, CA 93402, USA
		(lpatton@calpoly.edu)}
	\and~Ilya~M.~Spitkovsky\thanks{Division of Science, New York  University Abu Dhabi (NYUAD), Saadiyat Island,
		P.O. Box 129188, Abu Dhabi, UAE (ims2@nyu.edu, ilya@math.wm.edu, imspitkovsky@gmail.com). }}

\markboth{\Names}{\Title}

\maketitle

\begin{abstract}
The 4-by-4 nilpotent matrices the numerical ranges of which have non-parallel flat portions on their boundary that are on lines equidistant from the origin are characterized. Their numerical ranges are always symmetric about a line through the origin and all possible angles between the lines containing the flat portions are attained. 
\end{abstract}

\begin{keywords}
Numerical ranges, nilpotent matrices.
\end{keywords}
\begin{AMS}
15A60, 47A12. 
\end{AMS}



\section{Introduction}

Let $T$ be a bounded linear operator on a complex Hilbert space $H$, where the inner product of vectors $w$ and $v$ in $H$ is denoted $\langle w, v \rangle$ and the norm of $v$ is denoted $\| v \|.$ The numerical range of $T$ is the subset of the complex plane $\mathbb{C}$ defined by 

$$W(T)= \left\{ \langle T v, v \rangle \, \colon v \in H, \| v \|=1 \right\}.$$

The set $W(T)$ is bounded by the operator norm $\|T \|$ and the Toeplitz-Hausdorff theorem \cite{Toe18}, \cite{Hau} states that $W(T)$ is convex.

 If $H=\mathbb{C}^n$ with the standard inner product $\langle w, v \rangle=\sum_{j=1}^n w_j \overline{v}_j$,  then the numerical range is also closed. In this case the operator $T$ is represented by an $n$-by-$n$ matrix $A$. The set $W(A)$ contains the spectrum $\sigma(A)$ of $A$ and if $A$ is normal, then $W(A)$ is the convex hull of $\sigma(A)$. When $n=2$ and $A$ is not normal, the numerical range is an elliptical disk. These results and other basic facts about the numerical range are in many sources, including \cite[Chapter 1]{HJ2} and the recent monograph \cite[Chapter 6]{DGSV}.

 In 1951, Kippenhahn  \cite{Ki} (see also the English translation \cite{Ki08}) described the numerical range of a matrix $A$ in terms of a certain associated homogeneous polynomial.  The natural domain for these polynomials is the complex projective plane $\mathbb{P}_2(\mathbb{C})$. Define $\mathbb{P}_2(\mathbb{C})$ to be the set of equivalence classes $[(x,y,z)]$ of ordered nonzero triples $(x,y,z) \in \mathbb{C}^3$ such that $(x,y,z) \sim (x',y',z')$ if and only if there exists $\lambda \in \mathbb{C} \setminus \left\{0\right\}$ such that $(x,y,z)=\lambda(x',y',z')$. Any element $[(u,v,w)]$ of $\mathbb{P}_2(\mathbb{C})$ defines both a point and a line 

$$\ell_{[(u,v,w)]}=\left\{ [(x,y,z)] \in \mathbb{P}_2(\mathbb{C}) \, \vert \, ux+vy+wz=0 \right\}.$$

 The zero set of a homogenous polynomial $p$ of three variables in $\mathbb{P}_2(\mathbb{C})$ is called an algebraic curve. It can be defined in point coordinates, resulting in the curve
 
 \begin{equation*}\label{pointcoords}
\Gamma'=\left\{ [(x,y,z)] \in \mathbb{P}_2(\mathbb{C}) \, \vert \, p(x,y,z)=0 \right\},
\end{equation*}
or in line coordinates, resulting in the dual curve $\Gamma$ which is the envelope of the set of all lines

\begin{equation}\label{linecoords}
\left\{ \ell_{[(u,v,w)]} \, \vert \, p(u,v,w)=0 \right\}.
\end{equation}

 By duality, a line is tangent to $\Gamma$ if and only if it defines a point on $\Gamma'$ and vice-versa. The theory of algebraic curves (such as results which count curve intersections) can be stated most concisely in $\mathbb{P}_2(\mathbb{C})$, but by identifying the point $(u,v)$ with $(u,v,1)$, one can consider the set $\mathbb{C}^2$ and the complex plane (identified with $\mathbb{R}^2$) as subsets of $\mathbb{P}_2(\mathbb{C})$. In particular, the real affine part of  $\Gamma$ (or $\Gamma'$) is the set of all lines (respectively, points) determined by $(u,v) \in \mathbb{R}^2$ such that $p(u,v,1)=0$. See \cite{Shap2} or \cite{DGSV} for concise discussions of facts about algebraic curves and the projective plane that are useful for the study of numerical ranges.

If $A$ is an $n$-by-$n$ matrix, define the associated homogeneous polynomial $p_A$ by
\begin{equation}\label{pa} p_A(u,v,w)=\det(uH+vK+wI),
\end{equation}
 where $ H=\frac{A+A^*}{2}:=\re A, \quad K=\frac{A-A^*}{2i}:=\im A$, and $I$ is the $n$-by-$n$ identity matrix. Kippenhahn showed that the numerical range of $A$ is the convex hull of the real affine part of the curve associated with  $p_A$  in line coordinates. That is, if  $\mathcal{C}(A)$ is the real affine part of the curve associated with $p_A$ whose tangent lines are defined as in \eqref{linecoords}, then the numerical range $W(A)$ is the convex hull of $\mathcal{C}(A)$ in $\mathbb{C}$. Therefore, if $p_A(u_0, v_0, 1)=0$ for $(u_0,v_0) \in \mathbb{R}^2$, then the line $\ell_{(u_0,v_0)}$ defined by
 
 \begin{equation}\label{ell}
\ell_{(u_0,v_0)}=\left\{ (x,y) \in \mathbb{R}^2 \, \vert \, u_0 x+v_0 y+1=0 \right\}.
 \end{equation}
 is tangent to the curve $\mathcal{C}(A)$ and the collection of these tangent lines determines the curve, the convex hull of which is $W(A)$. Kippenhahn called $\mathcal{C}(A)$ the ``boundary generating curve" of $W(A)$. Following \cite{DGSV}, we adopt the term the {\em Kippenhahn curve} of $A$ and we'll refer to $p_A$ as the {\em numerical range (NR) generating polynomial} of $A$. We will use notation $\mathcal{C}'(A)$ to denote the real affine part of the point curve consisting of the points $(x,y) \in \mathbb{R}^2$ satisfying $p_A(x,y,1)=0$. 
 
 Kippenhahn used $p_A$ to classify the possible numerical ranges of 3-by-3 matrices as follows. If $A$ is not normal but is unitarily reducible to the direct sum of a 2-by-2 block and a 1-by-1 block, the numerical range of $A$ is the convex hull of an ellipse and a point, which could either be an elliptical disk if the point is in the disk or the convex hull of a point and an ellipse with flat portions on the boundary meeting at the point if it is exterior to the elliptical disk. If $A$ is unitarily irreducible, meaning it is not unitarily equivalent to a block diagonal matrix, then $W(A)$ could be either an elliptical disk, an ovular shape, or it could have one flat portion on the boundary. This flat portion arises if $p_A$ has a certain type of real singularity. In \cite{KRS}, \cite{RS05}, Kippenhahn's conditions were simplified to allow one to classify and thoroughly analyze the numerical range of a 3-by-3 matrix in terms of its entries. 
 
 A very large number of papers have been written about the numerical range since Kippenhahn's paper. However, progress on a complete analysis of the numerical range of $n$-by-$n$ matrices for specific $n>3$ has been gradual since complications arise quickly as the matrix size increases. 
 
An algebraic curve $\mathcal{C}$ (in line or point coordinates) is defined to be irreducible if $\mathcal{C}$ is the zero set of an irreducible polynomial. Determinant properties show that if the matrix $A$ is unitarily reducible, then the Kippenhahn curve is reducible, but the converse of this statement is false if $n>2$ \cite{Ki}, \cite{Ki08}. In \cite{ChiNa12}, Chien and Nakazato analyzed the Kippenhahn curve for $n=4$ and found 21 possible shapes for the numerical range corresponding to different combinations and types of singularities when the Kippenhahn curve is irreducible. 
   
One particularly tangible question to ask about the numerical range of a matrix $A$ is whether there are flat portions on the boundary and if so, how many? If a matrix $A$ is block diagonal and the numerical ranges of the blocks do not satisfy a restrictive subset condition, the convexity of $W(A)$ will produce line segments between the numerical ranges of the blocks as is the case for normal $A$ or, if $n=3$,  in one of the cases mentioned above. However, flat portions can also appear when the matrix is unitarily irreducible if there are singularities in the NR generating polynomial; this also already materializes in the $n=3$ case.  As described in \cite{GauWu081}, since an irreducible homogeneous polynomial of degree $n$ has at most $(n-1)(n-2)/2$ singularities, that value is an upper bound for the number of flat portions on the boundary of an $n$-by-$n$ matrix $A$ when $p_A$ is irreducible. If $p_A$ is reducible, there is a slightly higher upper bound on the number of flat portions given by $n(n-1)/2$. These upper bounds are not sharp in general. 
 
 Recall that a matrix $A$ is called nilpotent if there exists a positive integer $m$ such that $A^m=0$. Since the spectrum of a nilpotent matrix $A$ is the singleton $\left\{0\right\}$, the upper triangular form of $A$ has zeros on the main diagonal and $W(A)$ contains the origin. These conditions simplify the analysis of flat portions on the boundary of the numerical range. Note that a non-zero nilpotent 2-by-2 matrix has a circular disk as the numerical range and thus no flat portions on its boundary. As Kippenhahn showed, a 3-by-3 matrix has at most one flat portion on its numerical range boundary; examples with one flat portion where the matrix is nilpotent exist, and the respective criterion is in \cite[Theorem~4.1]{KRS}. Gau and Wu \cite{GauWu081} conjectured  that the boundary of the numerical range of an $n$-by-$n$ nilpotent matrix contains at most $n-2$ flat portions if $2 \leq n \leq 5$ and contains at most $2(n-4)$ flat portions if $n \geq 6$. They provided evidence for their conjecture by proving it for the special case of an $n$-by-$n$ nilpotent matrix $A$ which has an $(n-1)$ -by-$(n-1)$ principal submatrix the numerical range of which is a circular disk centered at the origin. In \cite{MPST}, Militzer, Tsai, and the last two authors of the current paper proved Gau and Wu's conjecture in the $n=4$ case. That is, the boundary of the numerical range of a 4-by-4 nilpotent matrix has at most two flat portions. The paper \cite{MPST} also included characterizations of numerical ranges of 4-by-4 nilpotent matrices with two parallel flat portions on their boundary. In particular, it was observed there that such flat portions have to be equidistant from the origin.
 
As it happens, two non-parallel flat portions can also materialize. One such example, provided in \cite{GauWu081}, is delivered by

 \begin{equation}\label{gw} 
 A=\left[\begin{array}{cccc}0 & 1 & 0 & -2 \\0 & 0 & 2 & $i$ \\0 & 0 & 0 & 1 \\0 & 0 & 0 & 0\end{array}\right].
 \end{equation}
The flat portions on $\partial W(A)$ are on lines equidistant from the origin and $W(A)$ is symmetric about the imaginary axis. In Section 3, we show that for any nilpotent 4-by-4 matrix with flat portions on the boundary of the numerical range that are on lines equidistant from the origin, the numerical range is symmetric about a line through the origin. Section 4 contains the main result of the paper, Theorem \ref{allangles}, where we characterize the 4-by-4 nilpotent matrices such that the boundary of the numerical range contains two nonparallel flat portions on lines equidistant from the origin. We show that every possible angle between those lines is attained and provide examples of families of matrices that exhibit all angles in Section 5. It seems to be an open question whether one can construct a 4-by-4 nilpotent matrix with flat portions on the boundary of the numerical range that are on non-parallel lines at different distances from the origin.

\section{Methods to verify existence of flat portions}
\setcounter{equation}{0}
For an $n$-by-$n$ matrix $A$, there are two well-known methods to determine whether or not the boundary of $W(A)$ contains a flat portion on a given line. One method involves singularities of the NR generating polynomial $p_A$ and the other involves the eigensystem of the real part of a suitably chosen rotation of $A$. 
Although equivalent formulations have been used in the literature, we will include precise statements of the required results and some proofs.

\begin{prop}\label{flattest} Let $A$ be an $n$-by-$n$ complex matrix with NR generating polynomial $p:=p_A$ defined by \eqref{pa}. Assume $0 \in W(A)$ and $(u_0,v_0) \in \mathbb{R}^2$. Let $a_{11}=p_{uu}(u_0,v_0,1),$ $a_{12}=p_{uv}(u_0,v_0,1)$, and $a_{22}=p_{vv}(u_0,v_0,1)$. There is a flat portion on the intersection of the boundary of $W(A)$ and the line $\ell_{(u_0,v_0)}$ defined in \eqref{ell} if 
\begin{equation*}\label{gradient}
\nabla p(u_0,v_0,1)=(0,0,0),
\end{equation*}
where $\nabla p$ denotes the gradient of $p$,
\begin{equation}\label{distinct}
 a_{22} \ne 0, \, \ a_{11}a_{22}  <a_{12}^2, \, \text{ and } a_{11}u_0^2+2a_{12} u_0 v_0 +a_{22} v_0^2 \ne 0, 
\end{equation}
and
\begin{equation}\label{extremeline}
p(u_0,v_0,\gamma)=0 \, \text{ implies } \gamma \leq 1.
\end{equation}
When this occurs, 
 \begin{align}\label{pointsoncurve}
\begin{split}
(s_1, t_1)&=\left(\frac{a_{12}-\sqrt{a_{12}^2-a_{11}a_{22}}}{-a_{22}v_0-(a_{12}-\sqrt{a_{12}^2-a_{11}a_{22}} )u_0},\frac{a_{22}}{-a_{22}v_0-(a_{12}-\sqrt{a_{12}^2-a_{11}a_{22}} )u_0} \right),\\
(s_2, t_2)&=\left(\frac{a_{12}+\sqrt{a_{12}^2-a_{11}a_{22}}}{-a_{22}v_0-(a_{12}+\sqrt{a_{12}^2-a_{11}a_{22}} )u_0},\frac{a_{22}}{-a_{22}v_0-(a_{12}+\sqrt{a_{12}^2-a_{11}a_{22}} )u_0} \right). 
\end{split}
\end{align} are the endpoints of the flat portion on the boundary of $W(A)$. 

\end{prop}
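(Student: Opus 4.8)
The plan is to recognize that the hypothesis $\nabla p(u_0,v_0,1)=(0,0,0)$ makes $(u_0,v_0)$ a singular point of the real point curve $\mathcal{C}'(A)=\{(x,y): p(x,y,1)=0\}$, and to translate the local structure of that singularity into the geometry of the boundary generating curve $\mathcal{C}(A)$, whose convex hull is $W(A)$. First I would invoke Euler's identity for the homogeneous polynomial $p$: since $u p_u+v p_v+w p_w=np$, the vanishing of all three first partials at $(u_0,v_0,1)$ forces $p(u_0,v_0,1)=0$, so $(u_0,v_0)$ indeed lies on $\mathcal{C}'(A)$ and is a singular point of it. Consequently the dual line $\ell_{(u_0,v_0)}$ of \eqref{ell} is a multiple tangent of $\mathcal{C}(A)$, which is the mechanism that can create a line segment on $\partial W(A)$.

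Next I would analyze the type of the singularity through its tangent cone. Expanding $p(u_0+\xi,v_0+\eta,1)$ and using that the linear terms vanish, the lowest-order part is the quadratic form $a_{11}\xi^2+2a_{12}\xi\eta+a_{22}\eta^2$, so the two branch directions at the singular point are the solutions $(\xi,\eta)$ of $a_{11}\xi^2+2a_{12}\xi\eta+a_{22}\eta^2=0$. The condition $a_{11}a_{22}<a_{12}^2$ in \eqref{distinct} guarantees that these directions are real and distinct, i.e.\ the singularity is a genuine node rather than an isolated (acnodal) point; with $a_{22}\ne0$ I may normalize $\xi=1$ and obtain the two slopes $q_\pm=(-a_{12}\pm\sqrt{a_{12}^2-a_{11}a_{22}})/a_{22}$.

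To locate the two contact points of $\ell_{(u_0,v_0)}$ on $\mathcal{C}(A)$, I would compute the envelope of the tangent-line family $\ell_{(u,v)}\colon ux+vy+1=0$ as $(u,v)$ runs along $\mathcal{C}'(A)$. Differentiating the line equation along a branch with direction $(\xi,\eta)$ gives the auxiliary equation $\xi x+\eta y=0$, and solving this together with $u_0x+v_0y+1=0$ yields the contact point $x=-\eta/(u_0\eta-v_0\xi)$ and $y=\xi/(u_0\eta-v_0\xi)$. The third requirement in \eqref{distinct}, namely $a_{11}u_0^2+2a_{12}u_0v_0+a_{22}v_0^2\ne0$, is precisely what prevents either branch direction from being proportional to $(u_0,v_0)$; equivalently it keeps the denominator $u_0\eta-v_0\xi$ nonzero, so both contact points are finite and distinct. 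Substituting the two slopes $q_\pm$ (with $\xi=1,\ \eta=q_\pm$) into this formula and simplifying reproduces the two points $(s_1,t_1)$ and $(s_2,t_2)$ of \eqref{pointsoncurve}; this is a direct, if slightly delicate, algebraic verification.

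Finally I would show the segment joining these points lies on the boundary of $W(A)$, not in its interior. Lines parallel to $\ell_{(u_0,v_0)}$ that are tangent to $\mathcal{C}(A)$ correspond to the roots $\gamma$ of $p(u_0,v_0,\gamma)=0$, on which $u_0x+v_0y=-\gamma$; because $\gamma=1$ is a root and, by \eqref{extremeline}, the largest one, the value $u_0x+v_0y=-1$ is the minimum of $u_0x+v_0y$ over these tangent lines, so $\ell_{(u_0,v_0)}$ supports the convex set $W(A)$ (the normalization being consistent with $0\in W(A)$, which gives $u_0\cdot0+v_0\cdot0=0\ge-1$). Hence the bitangent $\ell_{(u_0,v_0)}$ is a supporting line of $W(A)$ touching $\mathcal{C}(A)$ at the two points found above, and the segment between them is a flat portion of $\partial W(A)$. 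I expect the main obstacle to be this last conceptual bridge: rigorously passing from the local node data (the tangent cone determined by the Hessian) to the global statement that $\ell_{(u_0,v_0)}$ is a \emph{supporting} bitangent whose two real contact points bound an actual flat edge, i.e.\ coupling the singularity analysis with convexity and the extremality condition \eqref{extremeline}. Once that is in place, the endpoint formulas are a routine computation.
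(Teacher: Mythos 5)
Your proposal is correct and takes essentially the same route as the paper's proof: both expand $p(u,v,1)$ to second order at the singular point, use \eqref{distinct} to extract two distinct real tangent directions whose dual points are exactly \eqref{pointsoncurve}, and then use \eqref{extremeline} together with $0\in W(A)$ to show $\ell_{(u_0,v_0)}$ is a supporting bitangent rather than an interior tangent. Your small variations---citing Euler's identity explicitly for $p(u_0,v_0,1)=0$, finding the contact points via the envelope system $u_0x+v_0y+1=0$, $\xi x+\eta y=0$ instead of normalizing the tangent-line equations, and phrasing the support argument as minimizing the functional $u_0x+v_0y$ over parallel tangent lines rather than the paper's case analysis on $y$-intercepts---are cosmetic restatements of the same steps.
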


\begin{remark} The length $L$ of the flat portion is the distance between the points $(s_1,t_1)$ and $(s_2, t_2)$, which is
\begin{equation}\label{lengthofflat}
L=\frac{2 \sqrt{a_{12}^2-a_{11}a_{22}} \sqrt{u_0^2+v_0^2}}{|a_{11}u_0^2+2a_{12}u_0v_0+a_{22}v_0^2|}.
\end{equation}

\end{remark}

\begin{proof} Let $A$ be an $n$-by-$n$ complex matrix with NR generating polynomial $p$ that satisfies the hypotheses above. The condition $\nabla p(u_0,v_0,1)=(0,0,0)$ implies $p(u_0, v_0, 1)=0$ as well. Expand $p(u,v,1)$ in a Taylor series about the point $(u_0,v_0)$ to obtain

\begin{align*}p(u,v,1)&=0+ 0 (u-u_0)+0 (v-v_0)+ \frac{1}{2}\left(a_{11}(u-u_0)^2+ 2a_{12} (u-u_0) (v-v_0) + a_{22} (v-v_0)^2 \right) + \ldots  \\
&= \frac{1}{2}\left(a_{11}(u-u_0)^2+ 2 a_{12}(u-u_0) (v-v_0) + a_{22} (v-v_0)^2 \right) + \ldots \\
\end{align*}
Since by assumption not all second order partial derivatives are zero, we can approximate $p$ near $(u_0,v_0,1)$ with its second order terms and find the tangent line(s) with those terms. 

\begin{align*}p(u,v,1)&\approx \frac{a_{22}}{2}\left( v-v_0+\frac{u-u_0}{a_{22}}\left(a_{12}-\sqrt{a_{12}^2-a_{11}a_{22}} \right)\right) \left( v-v_0+\frac{u-u_0}{a_{22}}\left(a_{12}+\sqrt{a_{12}^2-a_{11}a_{22}} \right)\right)\\
&= \frac{a_{22}}{2}\left( \frac{a_{12}-\sqrt{a_{12}^2-a_{11}a_{22}} }{a_{22}} u+v-v_0-\frac{a_{12}-\sqrt{a_{12}^2-a_{11}a_{22}} }{a_{22}} u_0\right) \cdot \\
&\left( \frac{a_{12}+\sqrt{a_{12}^2-a_{11}a_{22}} }{a_{22}} u+v-v_0-\frac{a_{12}+\sqrt{a_{12}^2-a_{11}a_{22}} }{a_{22}} u_0\right) .
\end{align*}

Therefore there are two (possibly coinciding) tangent lines to  $\mathcal{C}'(A)$, the real affine curve in point coordinates, at $(u_0,v_0,1)$. Using \eqref{distinct}, they can be written in the standard form $s_j u + t_j v +1=0$ (for $j=1,2$) with  $(s_1, t_1)$ and $(s_2, t_2)$ given by \eqref{pointsoncurve}.  

Since $a_{12}^2-a_{11}a_{22}>0$, the points $(s_1, t_1)$ and $(s_2, t_2)$ are real and distinct. Thus the curve $\mathcal{C}'(A)$ has two distinct tangent lines, $\ell_{(s_1,t_1)}$ and $\ell_{(s_2,t_2)}$   at  the point $(u_0,v_0)$ which means the dual $\mathcal{C}(A)$ contains two distinct points $(s_1, t_1)$ and $(s_2, t_2)$ on the line $\ell_{(u_0,v_0)}$. Therefore the convex hull of $\mathcal{C}(A)$ includes the line segment between $(s_1, t_1)$ and $(s_2, t_2)$. We only need to show that $\ell_{(u_0,v_0)}$ is on the boundary of $W(A)$, as opposed to being a tangent line to a component of $\mathcal{C}(A)$ in the interior of $W(A)$,  to complete the proof. 

If $v_0 \ne 0$, the line $\ell_{(u_0,v_0)}$ has $y$-intercept $-1/v_0$. The set $W(A)$ will have $\ell_{(u_0,v_0)}$ as a support line if the parallel lines $\ell_{u_0/\gamma,v_0/\gamma}$ with $p(u_0/\gamma, v_0/\gamma, 1)=0$ are either all above or all below $\ell_{(u_0,v_0)}$. The line $\ell_{u_0/\gamma,v_0/\gamma}$ has $y$-intercept $-\gamma/v_0$. If $v_0<0$, then $\ell_{(u_0,v_0)}$ crosses the $y$-axis above the origin and since the origin is in $W(A)$, it suffices to show all other tangent lines are below $\ell_{(u_0,v_0)}$. If $v_0>0$, then $\ell_{(u_0,v_0)}$ crosses the $y$-axis below the origin and it suffices to show other tangent lines are all above $\ell_{(u_0,v_0)}$. The hypothesis \eqref{extremeline} is enough to prove both these cases. If $v_0=0$, a similar argument shows $\ell_{(u_0,0)}$ is either the leftmost or rightmost vertical line tangent to the Kippenhahn curve. 

Therefore the segment between $(s_1, t_1)$ and $(s_2, t_2)$ on the line $\ell_{(u_0,v_0)}$ is a flat portion on the boundary of $W(A)$. \end{proof} 

Since the focus of this paper is the case where $A$ is a 4-by-4 nilpotent matrix, we include a result from \cite{MPST} about the special form of the NR generating polynomial $p_A$ for such a matrix. Let $$\beta_0=\tr(A^*AA^*A); \quad \beta_{11}=\tr(A A^*); \quad \beta_{22} =\tr(A^2 (A^*)^2) .$$
 In addition, we set $\beta_{21}= \tr(A^2 A^*)$ and $\beta_{31} = \tr(A^3 A^*)$; trace properties then imply that $\overline{\beta}_{21}=\tr(\left[A^* \right]^2 A)$ and $\overline{\beta}_{31}=\tr(\left[A^* \right]^3 A)$.

\begin{lem}\label{nilpotentbgc}{\em\cite{MPST}} Let $A$ be a 4-by-4 nilpotent matrix. The NR generating polynomial associated with $A$ is defined by
\begin{equation}\label{nilbgc}\begin{split} p_A(u,v,w)&=c_1 u^4+c_2 u^3 v+c_3 u^3w+ (c_1+c_4) u^2 v^2 +c_5 u^2 w^2\\+&c_6 u^2vw+c_2 uv^3+c_3uv^2w+c_4v^4+c_6v^3w+c_5 v^2w^2+w^4,\end{split}\end{equation} where the coefficients $c_j$ are given below.

$$c_1=-\frac{1}{16} \left(2 \re \beta_{31} + \beta_{22} +\frac{1}{2}\beta_0-\frac{1}{2} \beta_{11}^2 \right),$$
$$c_2=-\frac{1}{4}\im \beta_{31}, \quad c_3=\frac{1}{4} \re\beta_{21}, $$
$$c_4 = \frac{1}{16} \left(2 \re \beta_{31} - \beta_{22} -\frac{1}{2}\beta_0+\frac{1}{2} \beta_{11}^2 \right),\quad c_5 = -\frac{1}{4} \beta_{11},  \quad c_6= \frac{1}{4} \im \beta_{21}.$$

\end{lem}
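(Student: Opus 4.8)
The plan is to read $p_A(u,v,w)=\det(wI+M)$ with $M:=uH+vK$ as the characteristic polynomial of the Hermitian matrix $M$ (Hermitian because $u,v$ are real and $H,K$ are self-adjoint). Its coefficients as a polynomial in $w$ are then the elementary symmetric functions $e_1,\dots,e_4$ of the eigenvalues of $M$, and by Newton's identities these are universal polynomials in the power sums $p_k:=\tr(M^k)$. So the whole problem reduces to computing $\tr(M^k)$ for $k=1,2,3,4$ as homogeneous polynomials in $u,v$.

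First I would exploit nilpotency. Since $\sigma(A)=\{0\}$, every power trace $\tr(A^j)$ (and hence $\tr((A^*)^j)$) vanishes for $j\ge 1$; in particular $\tr H=\re\tr A=0$ and $\tr K=\im\tr A=0$, so $p_1=\tr M=0$. This collapses Newton's identities to $e_1=0$, $e_2=-\tfrac12 p_2$, $e_3=\tfrac13 p_3$, and $e_4=\tfrac18 p_2^2-\tfrac14 p_4$, which already explains the absence of a $w^3$ term in \eqref{nilbgc}. Next I would expand each $p_k=\tr((uH+vK)^k)$ and collect monomials in $u,v$, grouping the words in $H,K$ into cyclic-equivalence classes (trace is cyclically invariant): $p_2$ has coefficients $\tr(H^2),2\tr(HK),\tr(K^2)$; $p_3$ has $\tr(H^3),3\tr(H^2K),3\tr(HK^2),\tr(K^3)$; and $p_4$ has $\tr(H^4),4\tr(H^3K),4\tr(H^2K^2)+2\tr(HKHK),4\tr(HK^3),\tr(K^4)$.

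I would then substitute $H=\tfrac12(A+A^*)$, $K=\tfrac1{2i}(A-A^*)$, expand, and use nilpotency to discard every trace of a pure power of $A$ or $A^*$. The surviving words are matched to the prescribed invariants: length-two words give $\beta_{11}$; three-$A$/one-$A^*$ words all reduce by cyclicity to $\beta_{31}$ (their conjugates to $\overline\beta_{31}$), the length-three analogues to $\beta_{21},\overline\beta_{21}$; and among the two-$A$/two-$A^*$ length-four words one cyclic class gives $\tr(A^2(A^*)^2)=\beta_{22}$ while the other gives $\tr(AA^*AA^*)=\tr((A^*A)^2)=\beta_0$. A short computation yields $\tr(H^2)=\tr(K^2)=\tfrac12\beta_{11}$ and $\tr(HK)=0$, so $p_2=\tfrac12\beta_{11}(u^2+v^2)$, giving $e_2=-\tfrac14\beta_{11}(u^2+v^2)$ and hence $c_5=-\tfrac14\beta_{11}$. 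The cubic traces produce $c_3=\tfrac14\re\beta_{21}$ and $c_6=\tfrac14\im\beta_{21}$, with the coincidences $\tr(H^3)=3\tr(HK^2)$ and $\tr(K^3)=3\tr(H^2K)$ forcing the $w$-linear part to factor as $(c_3u+c_6v)(u^2+v^2)$, exactly as in \eqref{nilbgc}.

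The main obstacle is the quartic term $e_4=\tfrac18 p_2^2-\tfrac14 p_4$, which demands the careful length-four bookkeeping: correctly partitioning the six two-$A$/two-$A^*$ words into their two cyclic classes, tracking the sign $(-1)^{\#A^*}$ introduced by the $A-A^*$ in $K$, and recognizing $\tr(AA^*AA^*)=\beta_0$ rather than treating it as a new invariant. Assembling $\tr(H^4)=\tfrac12\re\beta_{31}+\tfrac14\beta_{22}+\tfrac18\beta_0$, $\tr(K^4)=-\tfrac12\re\beta_{31}+\tfrac14\beta_{22}+\tfrac18\beta_0$, and $\tr(H^3K)=\tfrac14\im\beta_{31}$ (with $\tr(HK^3)$ following by the $H\leftrightarrow K$ symmetry), together with the rotation-symmetric contribution $\tfrac18 p_2^2=\tfrac1{32}\beta_{11}^2(u^2+v^2)^2$, I would read off $c_1,c_2,c_4$ from the $u^4$, $u^3v$, and $v^4$ coefficients of $e_4$ and confirm that its $u^2v^2$ coefficient equals $c_1+c_4$. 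This last identity is a built-in consistency check, and it holds precisely because $p_2$ is proportional to $u^2+v^2$, equivalently because $\tr(H^2)=\tr(K^2)$ and $\tr(HK)=0$.
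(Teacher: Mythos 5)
Your proposal is correct, but note that this paper contains nothing to compare it against: Lemma \ref{nilpotentbgc} is imported verbatim from \cite{MPST} and no proof is given here, so your argument is necessarily a reconstruction rather than a parallel of a proof in the text. As a self-contained derivation it works. Writing $p_A(u,v,w)=\det(wI+M)$ with $M=uH+vK$ Hermitian, the coefficients of $w^{4-k}$ are indeed the elementary symmetric functions $e_k$ of the eigenvalues of $M$, and with $p_1=\tr M=0$ (nilpotency kills $\tr A$, hence $\tr H=\tr K=0$) Newton's identities give exactly your $e_2=-\tfrac12 p_2$, $e_3=\tfrac13 p_3$, $e_4=\tfrac18 p_2^2-\tfrac14 p_4$, explaining the missing $w^3$ term. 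All the trace values you assert check out against direct expansion: $\tr H^2=\tr K^2=\tfrac12\beta_{11}$ and $\tr(HK)=0$ give $c_5=-\tfrac14\beta_{11}$; the cubic words give $\tr H^3=3\tr(HK^2)=\tfrac34\re\beta_{21}$ and $\tr K^3=3\tr(H^2K)=\tfrac34\im\beta_{21}$, so $e_3=\tfrac14(u^2+v^2)\left(u\,\re\beta_{21}+v\,\im\beta_{21}\right)$, matching $c_3,c_6$; and the quartic bookkeeping yields $\tr(H^3K)=\tr(HK^3)=\tfrac14\im\beta_{31}$, $\tr(H^2K^2)=\tfrac18\beta_0$, $\tr(HKHK)=\tfrac14\beta_{22}-\tfrac18\beta_0$, from which $c_1,c_2,c_4$ come out exactly as stated, with the $u^2v^2$ coefficient equal to $c_1+c_4$ as required. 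Your route has the virtue of isolating where nilpotency enters (vanishing of all pure-power traces) and of organizing everything by cyclic word classes, which is more transparent than a brute-force expansion of the $4\times 4$ determinant.

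One caveat: your closing explanation of the consistency check is misattributed. The equality of the $u^2v^2$ coefficient of $e_4$ with $c_1+c_4$ does \emph{not} hold ``precisely because'' $p_2$ is proportional to $u^2+v^2$; that fact only accounts for the $\tfrac18 p_2^2$ contribution. The $-\tfrac14 p_4$ contribution requires separately the identity $4\tr(H^2K^2)+2\tr(HKHK)=\tr(H^4)+\tr(K^4)$, which is false for general Hermitian pairs with $\tr H^2=\tr K^2$, $\tr(HK)=0$: both sides pick up $\tr A^4$ and $\tr\left((A^*)^4\right)$ terms with different coefficients ($-\tfrac{6}{16}$ versus $\tfrac{2}{16}$ after normalization), so the identity genuinely uses $\tr A^4=\tr\left((A^*)^4\right)=0$, i.e.\ nilpotency again; with that, both sides equal $\tfrac12\beta_{22}+\tfrac14\beta_0$. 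Since your plan verifies the $u^2v^2$ coefficient by direct computation anyway, this is a wrong reason attached to a correct step, not a gap in the proof.
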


 Note that the polynomial \eqref{nilbgc} has a singularity at $(u,v,w)$ if and only if $\nabla p_A(u,v,w)=(0,0,0)$, which occurs when
 
 \begin{align}\label{singularities}
\begin{split}
(4u^3+2uv^2) c_1+(3u^2v+v^3)c_2+(3u^2w+v^2w)c_3+2uv^2 c_4+ 2uw^2c_5+2uvwc_6 &=0, \\
2u^2vc_1+(u^3+3uv^2)c_2+2uvwc_3+(2u^2v+4v^3)c_4+2vw^2c_5+(u^2w+3v^2w)c_6&=0,\\
(u^3+uv^2)c_3+(2u^2w+2v^2w)c_5+(u^2v+v^3)c_6 &=-4 w^3. 
\end{split}
\end{align}
Now we discuss using the eigensystem to analyze flat portions.  Given a square matrix $M$, the maximum real part of an element in $W(M)$ is the maximum eigenvalue $d$ of $\re M$. Therefore $x=d$ is the rightmost vertical support line of $W(A)$, and $\langle M \xi, \xi \rangle$ is on the boundary of $W(M)$ and also on $x=d$, if and only if $\xi$ is a unit eigenvector of $\re M$ corresponding to the eigenvalue $d$. 
	
Consequently, if there are two (linearly independent) unit eigenvectors $\xi_1$ and $\xi_2$ of $\re M$ corresponding to $d$ with $\im \langle M \xi_1, \xi_1 \rangle \ne \im \langle M \xi_2, \xi_2 \rangle$, then the entire line segment from $ \langle M \xi_1, \xi_1 \rangle$ to $ \langle M \xi_2, \xi_2 \rangle$ will be on the boundary of $W(M)$ and on the line $x=d$. 

In order to generalize this analysis to non-vertical flat portions,  one can rotate $W(M)$ through an angle $-\phi$ about the origin for each $\phi \in [0, 2 \pi)$ and apply the previous argument to $e ^{-i \phi} M$ in order to obtain the following criterion:

\begin{lem} \label{flatwithevec} Let $M$ be an n-by-n matrix. The boundary of  $W(M)$ contains a segment of the line $x \cos\phi + y \sin\phi=d$ if and only if $d$ is the largest eigenvalue of $\re (e^{-i \phi} M)$ and there are corresponding unit eigenvectors $\xi_1$ and $\xi_2$ that satisfy
\begin{equation*}
\im \langle e^{-i \phi} M \xi_1, \xi_1 \rangle \ne \im \langle e^{-i \phi} M \xi_2, \xi_2 \rangle.
\end{equation*}
\end{lem}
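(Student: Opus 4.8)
The plan is to reduce the general statement to the vertical case $\phi=0$, which is already established in the discussion preceding the lemma. The key observation is the rotation identity $W(e^{-i\phi}M)=e^{-i\phi}W(M)$, which follows immediately from the definition of the numerical range, since $\langle e^{-i\phi}Mv,v\rangle=e^{-i\phi}\langle Mv,v\rangle$ for every unit vector $v$. Geometrically, multiplication by $e^{-i\phi}$ is the rigid rotation of the plane by $-\phi$ about the origin, so it carries $W(M)$ onto $W(e^{-i\phi}M)$ and maps boundary segments to boundary segments.

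First I would record how the prescribed line transforms under this rotation. Writing $z=x+iy$, a direct computation gives $\re(e^{-i\phi}z)=x\cos\phi+y\sin\phi$, so the line $\{x\cos\phi+y\sin\phi=d\}$ is exactly the preimage under $z\mapsto e^{-i\phi}z$ of the vertical line $\{\re w=d\}$. Consequently, $\partial W(M)$ contains a segment of $\{x\cos\phi+y\sin\phi=d\}$ if and only if $\partial W(N)$ contains a segment of the vertical line $\{\re w=d\}$, where $N:=e^{-i\phi}M$.

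Next I would apply the already-proven vertical criterion to $N$. The preceding paragraphs establish that the rightmost vertical support line of $W(N)$ is $\{\re w=d\}$ with $d$ the largest eigenvalue of $\re N=\re(e^{-i\phi}M)$, that the boundary points of $W(N)$ on this line are precisely the values $\langle N\xi,\xi\rangle$ for unit eigenvectors $\xi$ of $\re N$ corresponding to $d$, and that this boundary set is a nondegenerate segment exactly when two such eigenvectors $\xi_1,\xi_2$ satisfy $\im\langle N\xi_1,\xi_1\rangle\ne\im\langle N\xi_2,\xi_2\rangle$. Substituting $N=e^{-i\phi}M$ then yields both directions of the claimed equivalence.

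The step requiring the most care is the ``only if'' direction, where one must argue that a boundary segment forces the line to be the extremal (after rotation, rightmost) support line and that the two-eigenvector condition is necessary. By convexity and closedness of $W(N)$, any line carrying a nondegenerate boundary segment must be a support line; because the prescribed outer normal is $(\cos\phi,\sin\phi)$, the relevant support line is the one maximizing $\re(e^{-i\phi}z)$, which pins down $d=\lambda_{\max}(\re N)$. One then observes that the imaginary parts $\im\langle N\xi,\xi\rangle=\langle(\im N)\xi,\xi\rangle$, taken over unit eigenvectors $\xi$ in the eigenspace of $\re N$ for $d$, sweep out an interval determined by the compression of $\im N$ to that eigenspace; this interval is nondegenerate, and hence a genuine segment appears, precisely when two such eigenvectors produce distinct imaginary parts, completing the argument.
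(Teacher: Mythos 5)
Your proposal is correct and takes essentially the same approach as the paper: the paper likewise treats the vertical case first (identifying $x=d$, $d=\lambda_{\max}(\re M)$, as the rightmost support line and characterizing boundary points on it as $\langle M\xi,\xi\rangle$ for unit eigenvectors $\xi$), and then obtains the general criterion by rotating through $-\phi$ and applying that argument to $e^{-i\phi}M$. Your extra detail on the ``only if'' direction, via the compression of $\im N$ to the top eigenspace of $\re N$ sweeping an interval, simply fills in steps the paper leaves implicit.
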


Some version of this result has been used in most papers describing flat portions on the boundary of the numerical range; for example, see \cite[Proposition~3.2]{KRS}, proof of Theorem~10 in \cite{BS041}, or \cite[Lemma~1.4]{GauWu08}.

Further analysis characterizing linearly independent eigenvectors which actually generate flat portions appeared in \cite{LLS13}. In \cite{MPST}, conditions on the entries of an upper triangular 4-by-4 nilpotent matrix which are necessary and sufficient for a flat portion to materialize on the boundary of the numerical range were derived. 

Note that if $d$ is the largest eigenvalue of $\re (e^{-i \phi} M)$ then $d I-\re (e^{-i \phi} M)$ is positive semidefinite and hence all the principal minors of $d I-\re (e^{-i \phi} M)$ are non-negative. Furthermore, if $\re (e^{-i \phi} M)$ has at least two linearly independent eigenvectors corresponding to the eigenvalue $d$ then $\det \left(d I-\re (e^{-i \phi }M)\right) =0$ and all principal 3 by 3 minors are also zero. The latter fact will be used in the proof of our main result, Theorem \ref{allangles}.

\section{Reflection symmetry of $W(A)$}\label{symmetry}
\setcounter{equation}{0}
The distance from the origin to the line $\ell_{(u,v)}$ defined in \eqref{ell} is $1/\sqrt{u^2+v^2}$. 
The point on $\ell_{(u,v)}$  that attains this minimum distance is $(x,y)= \left( \frac{-u}{u^2+v^2}, \frac{-v}{u^2+v^2} \right).$ When viewed as points in the complex plane, the involution $f(z)=-1/z$ maps a singularity $u+iv$ of $p_A$ to the distance-minimizing point $\frac{-u}{u^2+v^2}+i\frac{-v}{u^2+v^2}$ and vice-versa. Therefore, a pair of singularities is symmetric about a line through the origin if and only if the distance-minimizing points are symmetric about that same line. 

In particular, if $(u,v)=(r \cos\theta, r \sin\theta)$ then the point on $\ell_{(u,v)}$  that is closest to the origin is $$-\frac{1}{r} (\cos\theta, \sin\theta)=\frac{1}{r}( \cos(\theta+\pi), \sin(\theta+\pi));$$ the argument of this point is $\pi$ radians from the argument of the singularity $(u,v)$. If $\theta \in (0, \pi)$, then the slope of $\ell_{(u,v)}$ is $m=-\cot\theta$. 

In addition, any unitarily reducible 4-by-4 nilpotent matrix $A$ has at most one flat portion on the boundary of its numerical range \cite{MPST} so if $W(A)$ has two flat portions, then $A$ is irreducible and hence its eigenvalue 0 is in the interior of $W(A)$ \cite{Ki}, \cite{Ki08}. 

\begin{thm}\label{linesymmetry} Assume $A$ is a 4-by-4 complex nilpotent matrix such that $W(A)$ has two flat portions on its boundary. If the flat portions are on lines that are the same distance $d=1/r$ from the origin, then $W(A)$ is symmetric about a line through the origin. Namely, if the flat portions are on lines $\ell_{(u_1,v_1)}$ and $\ell_{(u_2,v_2)}$ where $(u_j,v_j)=(r\cos\theta_j, r\sin\theta_j)$ for $j=1,2$, then $W(A)$ is symmetric about the line 
\begin{equation}\label{lineofsymmetry}
 \sin\left( \frac{\theta_1+\theta_2}{2} \right) x-\cos \left( \frac{\theta_1+\theta_2}{2} \right) y=0. 
 \end{equation}
\end{thm}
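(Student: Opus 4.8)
The plan is to reduce the statement to reflection about the real axis by an initial rotation, and then to exploit the parity of the generating polynomial \eqref{nilbgc} under $v \mapsto -v$. Set $\psi = (\theta_1+\theta_2)/2$ and $B = e^{-i\psi}A$. Then $B$ is again nilpotent, $W(B)=e^{-i\psi}W(A)$, and a direct computation gives $p_B(u,v,w)=p_A(R_\psi(u,v),w)$, where $R_\psi$ is rotation by $\psi$. Hence the two singularities of $p_A$ at $(r\cos\theta_j,r\sin\theta_j)$ become singularities of $p_B$ at $(r\cos(\theta_j-\psi),r\sin(\theta_j-\psi))$, i.e. at $(U,-V,1)$ and $(U,V,1)$ with $U=r\cos\delta$, $V=r\sin\delta$, and $\delta=(\theta_2-\theta_1)/2$. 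Here I use that a flat portion on $\ell_{(u_0,v_0)}$ forces $\nabla p(u_0,v_0,1)=(0,0,0)$: the support line carrying a segment touches the dual curve at two points, so by duality $(u_0,v_0)$ is a node of the point curve. Because the two flat portions lie on distinct lines of the radius-$r$ circle, $\delta\not\equiv 0\pmod{\pi}$, so $V\neq 0$. Proving $W(A)$ symmetric about \eqref{lineofsymmetry} is then equivalent, after rotating back by $e^{i\psi}$, to proving $W(B)$ symmetric about the real axis.

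Next I would note that symmetry of $W(B)$ about the real axis follows as soon as $p_B(u,-v,w)=p_B(u,v,w)$, because $(u,v)\mapsto(u,-v)$ in line coordinates is exactly reflection of the tangent lines about the real axis, so the dual curve and hence its convex hull $W(B)$ are invariant. Inspecting \eqref{nilbgc} for $B$, the odd-in-$v$ part factors cleanly as
\begin{equation*}
p_B^{\mathrm{odd}}(u,v,w)=v\,(u^2+v^2)\,(c_2 u+c_6 w),
\end{equation*}
so it suffices to show that the coefficients $c_2$ and $c_6$ of $B$ vanish.

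The crux is a decoupling argument. Writing $p_B=p_B^{\mathrm{even}}+p_B^{\mathrm{odd}}$ by $v$-parity, I would average the two singularity conditions $\nabla p_B(U,\pm V,1)=(0,0,0)$. Since $\partial_u$ and $\partial_w$ preserve $v$-parity while $\partial_v$ reverses it, the sum and difference of the two gradients separate the even and odd contributions and yield $\nabla p_B^{\mathrm{odd}}(U,V,1)=(0,0,0)$ on its own. Evaluating the gradient from the displayed factorization gives $\partial_w p_B^{\mathrm{odd}}=V r^2 c_6$ and, once $c_6=0$ is known, $\partial_u p_B^{\mathrm{odd}}=V c_2(3U^2+V^2)$ at $(U,V,1)$. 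Since $V\neq 0$ and $r\neq 0$, the first relation forces $c_6=0$ and the second then forces $c_2=0$. Thus $p_B^{\mathrm{odd}}\equiv 0$, giving the desired symmetry.

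The main obstacle is recognizing this decoupling: that the odd-in-$v$ part of \eqref{nilbgc} is divisible by $v(u^2+v^2)$, and that the presence of a \emph{matched pair} of singularities symmetric in $v$ is precisely what makes the even and odd gradients vanish separately. Everything else — the rotation bookkeeping and the final two short derivative evaluations — is routine, though one must confirm $V\neq 0$ (guaranteed by the two flat portions lying on distinct lines) so that the divisions are legitimate, and handle the bookkeeping converting "symmetry of $W(B)$ about the real axis" back into the stated line \eqref{lineofsymmetry}.
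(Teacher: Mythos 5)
Your proof is correct, and although it shares the paper's overall skeleton---rotate by the half-angle $\psi=(\theta_1+\theta_2)/2$ so that the two singularities become $(U,\pm V,1)$, deduce $c_2=c_6=0$, and conclude that the generating polynomial is even in $v$, hence the Kippenhahn curve and $W$ are reflection-symmetric---your execution of the key step is genuinely different. The paper identifies the rotation angle geometrically (congruent right triangles at the intersection point of the two support lines) and then extracts $c_2=c_6=0$ by assembling the six gradient equations at $(u,\pm v,1)$ into an augmented $6\times 7$ linear system in $c_1,\dots,c_6$ and row reducing it to \eqref{uvuminusv}; you instead handle the rotation bookkeeping algebraically via $p_B(u,v,w)=p_A(R_\psi(u,v),w)$ and replace the row reduction by parity decoupling, $\nabla p_B^{\mathrm{odd}}(U,V,1)=(0,0,0)$, combined with the factorization $p_B^{\mathrm{odd}}=v(u^2+v^2)(c_2u+c_6w)$, from which $c_6=0$ (the $\partial_w$ component, using $Vr^2\neq 0$) and then $c_2=0$ (the $\partial_u$ component, using $V(3U^2+V^2)\neq 0$) fall out in two lines; I checked the factorization against \eqref{nilbgc} and both derivative evaluations, and they are right. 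Your route buys two things: it is computation-free where the paper grinds through a $6\times 7$ matrix, and, since $3U^2+V^2>0$ needs only $V\neq 0$, it covers the parallel case ($\delta=\pi/2$, $U=0$) uniformly, whereas the paper disposes of that case separately by citing Proposition~4.1 of \cite{MPST}. The one point where you are no more rigorous than the paper---but also no less, since the paper takes the same implication for granted---is the claim that a flat portion on $\ell_{(u_0,v_0)}$ forces $\nabla p(u_0,v_0,1)=(0,0,0)$; your duality justification is the standard heuristic, and if you want it airtight you can get it from Lemma~\ref{flatwithevec} together with the remark closing Section~2: two independent eigenvectors make $dI-\re(e^{-i\phi}A)$ of rank at most $n-2$, so its adjugate vanishes, and since every directional derivative of $\det$ at a matrix $M$ equals $\tr\left(\operatorname{adj}(M)\,N\right)$, all three first partials of $p_A$ vanish at the corresponding point.
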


\begin{proof}
Assume the hypotheses hold, so $W(A)$ has two flat portions on lines that are equidistant from the origin. 

If the lines are parallel, then Proposition 4.1 of \cite{MPST}
 and the remark following it show that $A$ is unitarily equivalent to a scalar multiple of a real matrix. The flat portions of the numerical range of this real matrix are horizontal and since the numerical range of a real matrix is symmetric about the real axis, they are parallel to the line of symmetry.  In general, the scaled matrix $A$ will therefore have flat portions on lines parallel to a line of symmetry. Without loss of generality the arguments of the singularities will satisfy $\theta_2=\theta_1+\pi$ so \eqref{lineofsymmetry} is equivalent to 
 $$  x \cos\theta_1+y \sin\theta_1=0,$$ which is parallel to the lines $\ell_{(u_1,v_1)}$ and $\ell_{(u_2,v_2)}$ that are described in the theorem statement. Thus the theorem holds when the flat portions are on parallel lines. 

Now assume the lines $\ell_{(u_1,v_1)}$ and $\ell_{(u_2,v_2)}$ containing the flat portions are not parallel. They will intersect at a point $s=|s| e^{i \tau}$. Denote the points minimizing distance from $\ell_{(u_1,v_1)}$ and $\ell_{(u_2,v_2)}$ to the origin by $P_1$ and $P_2$, respectively, where $P_j=-\frac{1}{r} (\cos\theta_j, \sin\theta_j)$ for $j=1,2$. We can assume without loss of generality that $\theta_2-\theta_1 \in (0, \pi)$. Draw line segments from the origin to $P_1$ and to $P_2$. These segments, denoted $\overline{P}_1$ and $\overline{P}_2$, are each of length $1/r$ and meet $\ell_{(u_1,v_1)}$ and $\ell_{(u_2,v_2)}$, respectively, in right angles. The line segment from the origin to $s$ is a common hypotenuse for two right triangles, each with base of length $1/r$, which must be congruent. See Figure \ref{sympic}.  The line segment from the origin to $s$ bisects the angle between $\overline{P}_1$ and $\overline{P}_2$. Therefore $\theta_2+\pi-\tau=\tau-(\theta_1+\pi)$, which shows $\tau=\frac{1}{2}(\theta_1+\theta_2)$ mod $2 \pi$. Now rotate $W(A)$ counterclockwise about the origin by $\pi-\tau$, or equivalently replace $A$ with $e^{i(\pi-\tau)} A$. This will place the point $s$ on the negative real axis. We will show the rotated numerical range is symmetric about the real axis, which will prove the original numerical range is symmetric about the line through the origin obtained by rotating the real axis counterclockwise by the angle $\tau-\pi$. This line satisfies the equation $\cos \left(\tau- \pi \right) y -\sin \left(\tau- \pi \right) x=0$, which reduces to \eqref{lineofsymmetry}.

Therefore for the remainder of this proof, assume the lines containing the flat portions intersect at a point on the negative real axis. 
The rotated pair of congruent triangles are symmetric about the real axis. This means the angle from the positive real axis to $\overline{P}_1$ is the negative of the angle from the positive real axis to $\overline{P}_2$ and the points $P_1$ and $P_2$ are symmetric about the real axis. By the involution argument in the first paragraph of this section, this means the singularities $(u_1,v_1)$ and $(u_2,v_2)$ that generated the lines are also symmetric about the real axis. Thus the form of the singularities is $(u,v)$ and $(u,-v)$ where $uv \ne 0$. 

\begin{figure}
\begin{center}
\includegraphics[scale=.6]{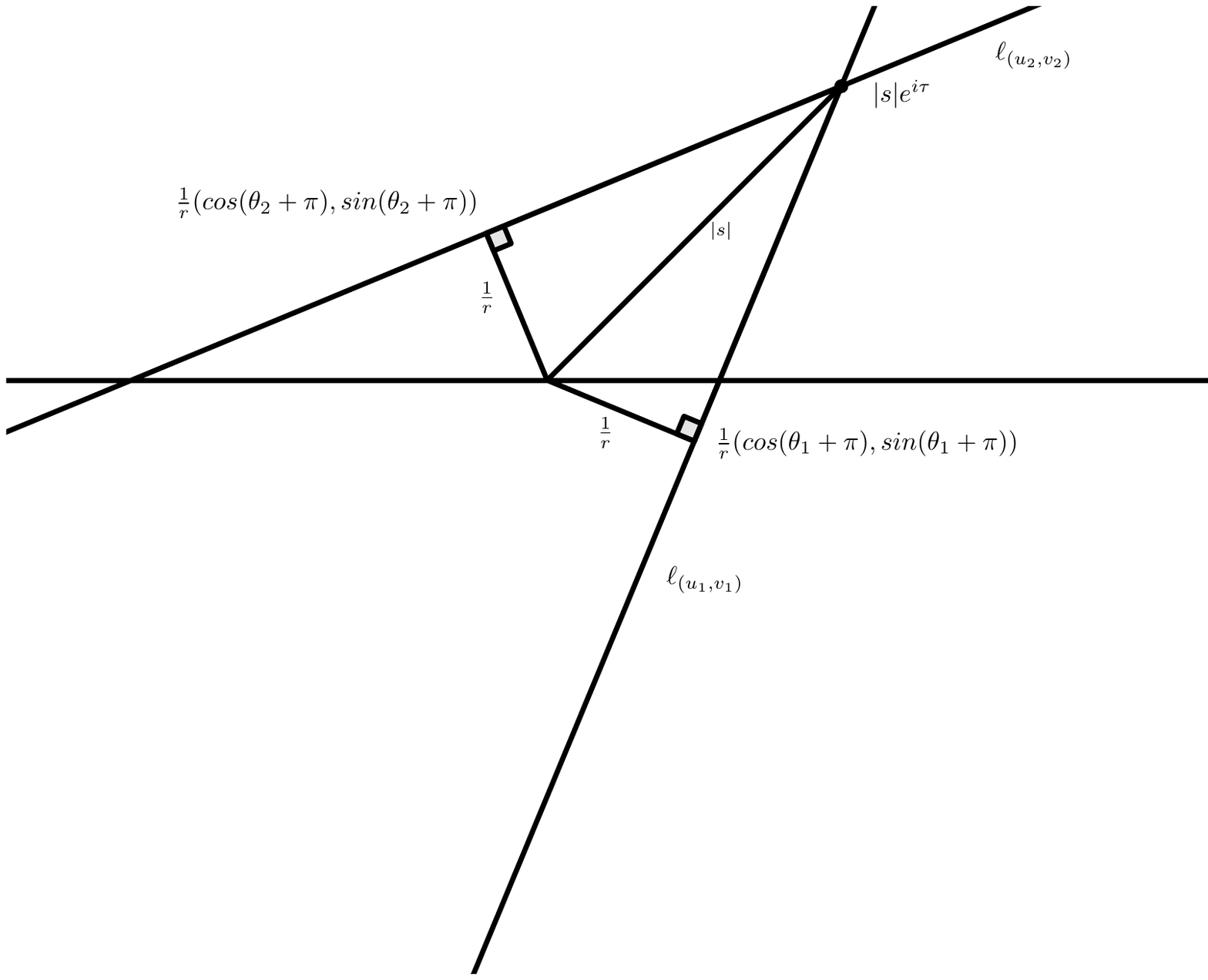}
\caption{}
\label{sympic}
\end{center}
\end{figure}

If the polynomial \eqref{nilbgc} has a singularity at $(u,v)$, then the system \eqref{singularities} with the augmented matrix

\begin{equation*}\label{system}
\begin{bmatrix}
 4 u^3+2 v^2 u & v^3+3 u^2 v & 3 u^2+v^2 & 2 u v^2 & 2 u & 2 u v & 0 \\
 2 u^2 v & u^3+3 v^2 u & 2 u v & 4 v^3+2 u^2 v & 2 v & u^2+3 v^2 & 0 \\
 0 & 0 & u^3+v^2 u & 0 & 2 u^2+2 v^2 & v^3+u^2 v & -4 \\
\end{bmatrix}
\end{equation*}
must be consistent.

If \eqref{nilbgc} has singularities at both $(u,v)$ and $(u,-v)$ then the system described by the augmented matrix

\begin{equation*}
\begin{bmatrix}
 4 u^3+2 v^2 u & v^3+3 u^2 v & 3 u^2+v^2 & 2 u v^2 & 2 u & 2 u v & 0 \\
 2 u^2 v & u^3+3 v^2 u & 2 u v & 4 v^3+2 u^2 v & 2 v & u^2+3 v^2 & 0 \\
 0 & 0 & u^3+v^2 u & 0 & 2 u^2+2 v^2 & v^3+u^2 v & -4 \\
 4 u^3+2 v^2 u & -v^3-3 u^2 v & 3 u^2+v^2 & 2 u v^2 & 2 u & -2 u v & 0 \\
 -2 u^2 v & u^3+3 v^2 u & -2 u v & -4 v^3-2 u^2 v & -2 v & u^2+3 v^2 & 0 \\
 0 & 0 & u^3+v^2 u & 0 & 2 u^2+2 v^2 & -v^3-u^2 v & -4 \\
\end{bmatrix}
\end{equation*}

must be consistent. 

Straightforward calculations show the system above is row equivalent to

\begin{equation}\label{uvuminusv}
\begin{bmatrix}
 4 u^3+2 v^2 u & v^3+3 u^2 v & 3 u^2+v^2 & 2 u v^2 & 2 u & 2 u v & 0 \\
 2 u^2 v & u^3+3 v^2 u & 2 u v & 4 v^3+2 u^2 v & 2 v & u^2+3 v^2 & 0 \\
 0 & 0 & u^3+v^2 u & 0 & 2 u^2+2 v^2 & v^3+u^2 v & -4 \\
 0 & -2 \left(v^3+3 u^2 v\right) & 0 & 0 & 0 & -4 u v & 0 \\
 0 & 2 u \left(u^2+3 v^2\right) & 0 & 0 & 0 & 2 \left(u^2+3 v^2\right) & 0 \\
 0 & 0 & 0 & 0 & 0 & -2 v \left(u^2+v^2\right) & 0 \\
\end{bmatrix}.
\end{equation}

Therefore, the coefficients $c_6=c_2=0$. This implies that the NR generating polynomial \eqref{nilbgc} is an even function of $v$. Hence the curve $\mathcal{C}'(A)$ and its dual, the Kippenhahn curve $\mathcal{C}(A)$, are symmetric about the real axis and thus the numerical range $W(A)$ is  also symmetric about the real axis as required.  \end{proof} 

\section{Main result}\label{chareqdis}
\setcounter{equation}{0}
In this section we prove the main result of the paper, which describes nilpotent matrices having numerical ranges with flat portions on a given pair of nonparallel lines that are equidistant from the origin. 

In the remainder of the paper we will use the abbreviations
	\begin{equation}\label{SC} S:=\sin(\theta/2), \quad C:=\cos(\theta/2), 
	\end{equation} 
suppressing the dependence on the argument $\theta/2$ for brevity.

\begin{thm}\label{allangles} Let $\theta \in (0, \pi)$ and let $d >0$. Assume $A$ is a 4-by-4 nilpotent matrix. Then the boundary of $W(A)$ contains two flat portions on non-parallel lines that are a distance $d$ from the origin and that meet at an angle $\theta$ if and only if $A$ is unitarily equivalent to a matrix of the form
\begin{equation}\label{Aform} e^{it} \left[\begin{array}{cccc}0 & x & \delta_1 & y \\0 & 0 & y & \delta_2 \\0 & 0 & 0 & x \\0 & 0 & 0 & 0\end{array}\right],
\end{equation}

where $t \in [0,2\pi)$,  $x \in (0, 2d)$, 

\begin{equation}\label{yrange} 0<y \leq \sqrt{\frac{16d^4-4d^2x^2}{4d^2-x^2 S^2}},
\end{equation}
and 

$$\delta_{1,2}= \frac{1}{2d} \left( xy S \pm \sqrt{x^2y^2 S^2+16d^4-4d^2(x^2+y^2)} \right).$$
Furthermore, in the case where $A$ satisfies these conditions, the length of each flat portion on the boundary is

$$L=\frac{8d^3xy C}{16d^4-x^2y^2S^2}.$$ \end{thm}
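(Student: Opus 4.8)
The plan is to prove both implications by using Theorem \ref{linesymmetry} to reduce to the normalized situation in which $W(A)$ is symmetric about the real axis and the two bounding lines meet on the negative real axis. The rotation effecting this reduction is exactly the factor $e^{it}$ appearing in \eqref{Aform}, so it suffices to treat the case $t=0$, i.e.\ a matrix $M$ with $W(M)$ symmetric about $\R$. In this configuration the two flat portions lie on lines making angle $\theta/2$ with the real axis, and a short trigonometric computation (using the distance $1/\sqrt{u^2+v^2}$ to $\ell_{(u,v)}$ and the description of the foot of the perpendicular from Section \ref{symmetry}) shows that the associated singularities of $p_M$ are located at $(S/d,\,C/d)$ and $(S/d,\,-C/d)$, with $S,C$ as in \eqref{SC}; these points have modulus $1/d$, consistent with distance $d$, and the outward normals $\pm(-S,C)$ correspond to the angles $\phi=\pm(\pi/2+\theta/2)$.

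For the forward direction I would first invoke the proof of Theorem \ref{linesymmetry}, which already shows symmetry forces $c_2=c_6=0$ in \eqref{nilbgc}. Putting $M$ in upper-triangular (Schur) form and normalizing the phases of its entries by a diagonal unitary conjugation, I would compute $\beta_0,\beta_{11},\beta_{21},\beta_{22},\beta_{31}$, hence $c_1,\dots,c_6$, in terms of the six entries, and substitute the singularity location $(S/d,C/d)$ into the system \eqref{singularities} (the reflected point is automatic by $v$-parity). The crux is solving the resulting polynomial system: I expect it to collapse to the relations $a_{12}=a_{34}=:x$ and $a_{14}=a_{23}=:y$, together with $\delta_1+\delta_2=xyS/d$ and $\delta_1\delta_2=x^2+y^2-4d^2$ for the remaining entries $\delta_{1,2}=a_{13},a_{24}$, which are precisely the symmetric functions yielding the stated quadratic formula for $\delta_{1,2}$. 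Reality of $\delta_{1,2}$ is then equivalent to nonnegativity of the discriminant $x^2y^2S^2+16d^4-4d^2(x^2+y^2)$, which rearranges into the bound \eqref{yrange}; nonemptiness of that range forces $x\in(0,2d)$, and $x,y>0$ may be arranged by sign-normalizing unitaries. (Alternatively, the necessary conditions could be organized around Lemma \ref{flatwithevec} and the vanishing of all principal $3\times3$ minors of $dI-\re(e^{-i\phi}M)$ at $\phi=\pm(\pi/2+\theta/2)$, the device flagged at the end of Section 2.)

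For the reverse direction I would take the explicit $M$ in \eqref{Aform} with $t=0$, compute $p_M$ via Lemma \ref{nilpotentbgc}, and verify that $\nabla p_M$ vanishes at $(S/d,\pm C/d)$, that the second-order conditions \eqref{distinct} hold there, and that the extremality condition \eqref{extremeline} is satisfied so the tangent lines are genuine support lines rather than interior tangencies; Proposition \ref{flattest} then produces two flat portions on $\partial W(M)$, at distance $d$ and meeting at angle $\theta$. The length follows from \eqref{lengthofflat}: since $u_0^2+v_0^2=1/d^2$ one has $\sqrt{u_0^2+v_0^2}=1/d$, and inserting the second derivatives $a_{11},a_{12},a_{22}$ of $p_M$ at the singularity simplifies $L$ to $8d^3xyC/(16d^4-x^2y^2S^2)$.

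The main obstacle is the forward direction, specifically solving the nonlinear system from \eqref{singularities} and showing it forces the rigid persymmetric pattern $a_{12}=a_{34}$, $a_{14}=a_{23}$ with all entries real. The two places I expect to need the most care are the phase-normalization bookkeeping — verifying that the residual diagonal-unitary freedom can be used to render the relevant entries real without discarding an invariant — and the proof that \eqref{extremeline} holds uniformly across the entire parameter family, rather than merely in a neighborhood of the singularities, so that the tangent segments are boundary segments of $W(A)$ throughout the characterized set.
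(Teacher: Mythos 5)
Your sufficiency direction and your overall normalization match the paper: reduce via Theorem \ref{linesymmetry} to the real-symmetric case, locate the singularities at $(S/d,\pm C/d)$, verify $\nabla p_A=0$, conditions \eqref{distinct} and \eqref{extremeline}, and apply Proposition \ref{flattest} with \eqref{lengthofflat} for the length; your claimed symmetric functions $\delta_1+\delta_2=xyS/d$ and $\delta_1\delta_2=x^2+y^2-4d^2$ are also exactly the paper's quadratic. The gap is in your primary plan for necessity: the system \eqref{singularities} does \emph{not} collapse to the persymmetric real pattern you expect. Substituting both singular points and row-reducing yields only five scalar relations on the six coefficients $c_1,\dots,c_6$ of \eqref{nilbgc} ($c_2=c_6=0$, $c_4=d^4$, and $c_1,c_3$ expressed through $c_5$). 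After the diagonal-unitary phase normalization you still have nine real entry parameters, so the solution variety of these coefficient constraints has dimension strictly larger than the two-parameter family \eqref{Aform}; in particular, reality of the off-band entries ($b_1=b_3=0$) cannot follow from them. This is not merely a dimension count: a singular point of the Kippenhahn polynomial need not produce a flat portion at all (it can be an isolated real point or a repeated tangent, as with the paper's family $A_k$ at $k=\sqrt{2}$), so the flatness hypothesis carries information beyond \eqref{singularities}, and any derivation of the matrix form must use it.

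What you relegated to a parenthetical alternative is in fact the indispensable step in the paper: because each supporting line contains a segment, the largest eigenvalue of $\re(e^{-i\phi}A)$ at the two normal angles $\phi=\pm(\pi/2+\theta/2)$ is degenerate (Lemma \ref{flatwithevec}), hence the positive semidefinite matrices $H=dI+\re(e^{i(\pi-\theta)/2}A)$ and $\hat H$ have \emph{all} principal $3\times3$ minors equal to zero. These eight equations give $b_1=b_3=0$ ($b_2=0$ already follows from $c_2=0$), the bounds $0<x,y,z\le 2d$ and $|a_j|\le 2d$ from the $2\times2$ minors, and four quadratic relations among $x,y,z,a_1,a_2,a_3$. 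Even then the conclusion $x=z$, $a_2=y$ is not automatic: the paper must run a genuine case analysis (cases $xy=a_2z$ versus $xy\neq a_2z$, and similarly for $xa_2-yz$), eliminating the spurious branches $a_2=-y$, $x=z$ with $a_2\neq y$, and $y=a_2$ with $x\neq z$ by deriving contradictions from the trace identity $a_2xyz=\tfrac{d}{2S}c_3\cdot 4$, the normalization $c_4=d^4$, and the semidefiniteness bounds above. Your proposal anticipates neither the necessity of the minor-vanishing device nor this branch elimination, which is where the real work of the forward direction lies; as written, the expected ``collapse'' of the singularity system would fail.
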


\begin{remark}\label{maxlength} For a fixed $d$ and $\theta$, the maximum length  occurs when $x=y$ equals the right side of  \eqref{yrange} above. To see this, note that $L$ is an increasing function of both $x$ and $y$. Thus $L$ is maximized on the boundary of the region defined by $0 < x \leq 2d$ and \eqref{yrange}. Symmetry of the expressions for $L$ and the upper limit of \eqref{yrange} produce a maximum value of $L$ when $x^2=y^2=4d^2/(1+C)$. In this case $L_{\mathrm{max}}=d.$

\end{remark}

\begin{remark} If two matrices have the form \eqref{Aform} for the same $d$ and $\theta$, then the numerical range only depends on $xy$. However, different $x$ values give non-unitarily similar matrices, because traces of words in $A$ and $A^*$ are unitary invariants, and

$$ \tr (A^2 (A^*)^2)=x^2y^2 \left( 2+\frac{x^2 S^2}{d^2} \right).$$

So in order for $A$ and $B$ to be unitarily similar, they must have the same value of $x$ and thus also the same value of $y$. This gives uncountably many different unitary equivalence classes with the same numerical range. Note that swapping the roles of $\delta_1$ and $\delta_2$ also produce non-unitarily equivalent matrices with the same numerical range unless $y$ takes on its maximum value and thus $\delta_1=\delta_2$. This contrasts with the 3-by-3 case result in \cite{KRS}, where all matrices with the same numerical range with one flat portion are unitarily equivalent. Moreover, in the 3-by-3 case the numerical range (and thus the unitary equivalence class of matrices) is completely determined by the endpoints of the flat portion.\end{remark}

\begin{remark}  Since $x,y$ in \eqref{Aform} are non-zero,  \cite[Theorem~4.1]{KRS} implies that
this matrix has a $3$-by-$3$ principal submatrix with a circular disk as its numerical range if and only if one of $\delta_j$ is equal to zero. So, matrices \eqref{Aform} with $\delta_1\delta_2\neq 0$ deliver an example when  conditions of \cite[Theorem~3.4]{GauWu081} are not satisfied while the conclusion still holds.
\end{remark}

\begin{remark} The matrix $A$ in \eqref{gw} is $-iB$ where 
$$B=\left[
\begin{array}{cccc}
 0 & i & 0 & -2 i \\
 0 & 0 & 2 i & -1 \\
 0 & 0 & 0 & i \\
 0 & 0 & 0 & 0 \\
\end{array}
\right].$$
The matrix $B$ is unitarily equivalent, via a diagonal unitary similarity, to \eqref{Aform} with $x=1$, $y=2$, $d=\frac{\sqrt{5}}{2}$, and $\theta=2 \arcsin{\frac{\sqrt{5}}{4}}$. In this case $\delta_1$ and $\delta_2$ are $0$ and $1$, respectively. 
\end{remark}

{\em Proof of Theorem~\ref{allangles}. } Necessity. Fix $d>0$ and $\theta \in (0,\pi)$ and assume $A$ is a 4-by-4 nilpotent matrix such that $W(A)$ has flat portions as described. By Theorem \ref{linesymmetry}, $W(A)$ is symmetric about a line through the origin. Without loss of generality, we may rotate $W(A)$ through an angle $-t$ so that the line of symmetry is the real axis and the non-parallel lines intersect at a point on the negative real axis. Therefore, we will assume the latter setting and show $A$ is unitarily equivalent to a matrix of the form \eqref{Aform} with $t=0$. 

Let $\ell_1$ and $\ell_2$ denote the lines containing the flat portions. Since these lines intersect on the negative real axis at a point $(-q,0)$ where $q>0$, we may assume $\ell_1$ intersects the imaginary axis at $(0,y_1)$ with $y_1>0$ while by symmetry $\ell_2$ intersects the imaginary axis at $(0,-y_1)$. Let $P_1$ and $P_2$ be the points on $\ell_1$ and $\ell_2$, respectively, that attain the minimum distance $d$ from the origin. Clearly the right triangle with vertices $(0,0)$, $(0,y_1)$ and $(-q,0)$ will contain $P_1$ on its hypotenuse with $d < \min\left\{ y_1, q \right\}$.  

By symmetry we will now restrict our analysis to the line $\ell_1$. As we have seen, if the singularity that corresponds to the flat portion on $\ell_1$ is at $(r \cos\alpha, r \sin\alpha)$, then $d=\frac{1}{r}$,  the slope of this line is $-\cot \alpha$ and the point $P_1$ on the line that achieves the minimum distance has coordinates $d(\cos (\alpha+\pi), \sin(\alpha +\pi))$. This means $\alpha+ \pi$ is an angle in quadrant two; hence $\alpha$ is in quadrant four. 

In addition to the results that follow from the singularity, note by hypothesis the line $\ell_1$ intersects the negative real axis at an angle $\theta/2$. Therefore the slope of this line is $\tan(\frac{\theta}{2})$. Setting $-\cot \alpha=\tan(\frac{\theta}{2})$ leads to $\alpha=\frac{\theta}{2}+\frac{\pi}{2} +k \pi$ for an integer $k$.  Since we know $\alpha$ is in quadrant four, it follows that $\alpha=\frac{\theta}{2} +\frac{3 \pi}{2}.$ Thus if $W(A)$ has the flat portions described in the preceding discussion, the singularities of the associated algebraic curve are at $(u_1, v_1)=\frac{1}{d} (\cos (\frac{\theta}{2} +\frac{3\pi}{2}), \sin (\frac{\theta}{2} +\frac{3 \pi}{2}))=\frac{1}{d}(S,-C)$ and $(u_2, v_2)=\frac{1}{d} (\cos (\frac{\theta}{2} +\frac{3 \pi}{2}), -\sin (\frac{\theta}{2} +\frac{3\pi}{2}))=\frac{1}{d}(S,C)$. 

The system of equations that the coefficients of the NR generating polynomial associated with $A$ must satisfy to have singularities at $(u,v)$  and $(u,-v)$ when $uv \ne 0$ (which holds here) is equivalent to \eqref{uvuminusv}. If we completely row reduce that system, we obtain

\begin{equation*}
\left[
\begin{array}{ccccccc}
 1 & 0 & 0 & 0 & -\frac{1}{u^2} & 0 & \frac{3 u^2+2 v^2}{u^2 \left(u^2+v^2\right)^2} \\
 0 & 1 & 0 & 0 & 0 & 0 & 0 \\
 0 & 0 & 1 & 0 & \frac{2}{u} & 0 & -\frac{4}{u^3+u v^2} \\
 0 & 0 & 0 & 1 & 0 & 0 & \frac{1}{\left(u^2+v^2\right)^2} \\
 0 & 0 & 0 & 0 & 0 & 1 & 0 \\
 0 & 0 & 0 & 0 & 0 & 0 & 0 \\
\end{array}
\right].
\end{equation*}

Now set $(u,v)=\frac{1}{d}(S,C)$ to obtain the following system of equations that the coefficients of the NR generating polynomial must satisfy to have two flat portions on lines a distance $d$ from the origin that meet at an angle $\theta$ on the negative real axis. 

$$R_0=\left[
\begin{array}{ccccccc}
 1 & 0 & 0 & 0 &-\frac{d^2}{S^2} & 0 & d^4+ \frac{2d^4}{S^2} \\
 0 & 1 & 0 & 0 & 0 & 0 & 0 \\
 0 & 0 & 1 & 0 & \frac{2 d}{S} & 0 & -4 \frac{d^3}{S}\\
 0 & 0 & 0 & 1 & 0 & 0 & d^4 \\
 0 & 0 & 0 & 0 & 0 & 1 & 0 \\
 0 & 0 & 0 & 0 & 0 & 0 & 0 \\
\end{array}
\right].$$

Hence we know the coefficients of the curve must satisfy:

$$c_1=\frac{d^2}{S^2} c_5+2 \frac{d^4}{S^2}+d^4,$$

$$c_2=c_6=0,$$

$$c_4=d^4,$$
and

$$c_3=-2 \frac{d}{S} c_5-4 \frac{d^3}{S} .$$

A unitary similarity argument shows that we may assume our matrix $A$ has the form

$$A=\left[\begin{array}{cccc}0 & x & a_1+ib_1 & a_2+ib_2 \\0 & 0 & y & a_3+ib_3 \\0 & 0 & 0 & z \\0 & 0 & 0 & 0\end{array}\right],$$
where $x,y,z$ are non-negative and $a_j,b_j$ are real for $j=1,2,3.$ 

Now we can also compute the NR generating polynomial coefficients from Lemma \ref{nilpotentbgc} to obtain

$$c_1=\frac{1}{16} \left(a_1^2 \left(a_3^2+b_3^2\right)-2 a_1 (a_2 a_3 y+a_3 x z+b_2 b_3 y)+a_2^2 y^2-2 x z a_2 y-2xzb_1 b_3\right.$$
$$\left.+2 a_2 b_1 b_3 y+a_3^2 b_1^2-2 a_3 b_1 b_2 y+b_1^2 b_3^2+b_2^2 y^2+x^2 z^2\right),$$

$$c_2=\frac{1}{4} b_2 x y z,$$

$$c_3=\frac{1}{4} (a_1 a_2 z+a_1 x y+a_2 a_3 x+a_3 y z+b_1 b_2 z+b_2 b_3 x),$$

$$c_4=\frac{1}{16} \left(a_1^2 \left(a_3^2+b_3^2\right)-2 a_1 (a_2 a_3 y+a_3 x z+b_2 b_3 y)\right.$$
$$\left.+a_2^2 y^2+2 a_2 b_1 b_3 y+2 a_2 x y z+a_3^2 b_1^2-2 a_3 b_1 b_2 y+b_1^2 b_3^2-2 b_1 b_3 x z+b_2^2 y^2+x^2 z^2\right),$$

$$c_5=\frac{1}{4} \left(-a_1^2-a_2^2-a_3^2-b_1^2-b_2^2-b_3^2-x^2-y^2-z^2\right),$$
and

$$c_6=\frac{1}{4} (-a_1 b_2 z+a_2 b_1 z+a_2 b_3 x-a_3 b_2 x-b_1 x y-b_3 y z).$$

Note $c_1-c_4=-\frac{1}{4} \mathrm{Re} \beta_{31}=-\frac{1}{4} a_2 x y z.$ Also, 

$$c_1-c_4=\frac{d^2}{S^2} c_5+2 \frac{d^4}{S^2}=-\frac{d}{2S} c_3.$$

The identities $c_2=c_6=0$ hold because $W(A)$ is symmetric about the real axis. If $c_3=0$ then $W(A)$ is also symmetric about the imaginary axis. This would result in four flat portions on the boundary, which is impossible for a 4-by-4 nilpotent matrix $A$. Therefore $c_1=c_4$ is impossible. So the variables $x$, $y$, $z$, and $a_2$ are all nonzero. 

Since $c_2=0$, this implies $b_2=0$. 

Now consider the triangle in the third quadrant with a vertex at the origin, a vertex at the point  $(-q,0)$ where the lines containing the flat portions meet, and a vertex on the flat portion at the point $P_2$ that attains the minimum distance to the origin. Since this is a right triangle and the angle where the support line meets the negative real axis is $\theta/2$, it follows that the angle between the line segment from the origin to $P_2$ and the negative real axis is $\frac{\pi-\theta}{2}$ By rotating clockwise, we see that $W(e^{i (\frac{\pi-\theta}{2})} A)$ has $x=-d$ as a support line. Set $M=e^{i (\frac{\pi-\theta}{2})} A$ and $\phi=\pi$ in Lemma \ref{flatwithevec} to see that $d$ is the maximum eigenvalue of the Hermitian matrix $-\re M=-\frac{1}{2}(e^{i \left(\frac{\pi-\theta}{2}\right)}A+e^{-i \left(\frac{\pi-\theta}{2}\right)} A^*)$ and there are at least two corresponding linearly independent eigenvectors. Let $ H=d I+(1/2)  (e^{i \left(\frac{\pi-\theta}{2}\right)}A+ e^{-i \left(\frac{\pi-\theta}{2}\right)} A^*)$. Then 

$$H=\left[
\begin{array}{cccc}
 d & \frac{1}{2} e^{i\frac{\pi -\theta }{2}} x & \frac{1}{2} (a_1+b_1 i) e^{i\frac{\pi -\theta }{2}}& \frac{1}{2} a_2 e^{i\frac{\pi -\theta }{2}} \\
 \frac{1}{2} e^{i\frac{\theta -\pi }{2}} x & d & \frac{1}{2} e^{i\frac{\pi -\theta }{2}}y & \frac{1}{2} (a_3+b_3 i) e^{i\frac{\pi -\theta }{2}} \\
 \frac{1}{2}(a_1-ib_1) e^{i\frac{\theta -\pi }{2}} & \frac{1}{2}e^{i\frac{\theta -\pi }{2}}y & d & \frac{1}{2} e^{i\frac{\pi -\theta }{2}}z \\
 \frac{1}{2} a_2  e^{i\frac{\theta -\pi }{2}} & \frac{1}{2}(a_3-ib_3) e^{i\frac{\theta -\pi }{2}}& \frac{1}{2} e^{i\frac{\theta -\pi }{2}} z & d \\
\end{array}
\right]$$
has determinant zero and all principal 3-by-3 minors are also zero. Since we obtain the same support line by rotating in the opposite direction, the above is also true for $\hat{H}=d I+(1/2)  (e^{-i \left(\frac{\pi-\theta}{2}\right)}A+ e^{i \left(\frac{\pi-\theta}{2}\right)} A^*)$.

Therefore we have:

$$\frac{1}{4} \left(-d \left(a_1^2+b_1^2-4 d^2+x^2+y^2\right)+a_1 x y S+b_1 x y C\right)=0,$$
$$\frac{1}{4} \left(-d \left(a_1^2+b_1^2-4 d^2+x^2+y^2\right)+a_1 x y S-b_1 x y C\right)=0,$$
$$\frac{1}{4} \left(-d \left(a_2^2+a_3^2+b_3^2-4 d^2+x^2\right)+xa_2 a_3 S +xa_2 b_3 C \right)=0,$$
$$ \frac{1}{4} \left(-d \left(a_2^2+a_3^2+b_3^2-4 d^2+x^2\right)+x a_2 a_3S -x a_2 b_3C \right)=0,$$
$$ \frac{1}{4} \left(-d \left(a_1^2+a_2^2+b_1^2-4 d^2+z^2\right)+z a_1 a_2S +z a_2 b_1C\right)=0,$$
$$ \frac{1}{4} \left(-d \left(a_1^2+a_2^2+b_1^2-4 d^2+z^2\right)+za_1 a_2 S -z a_2 b_1C \right)=0,$$

$$ \frac{1}{4} \left(-d \left(a_3^2+b_3^2-4 d^2+y^2+z^2\right)+a_3 y z S+b_3 y z C\right)=0,$$

$$\frac{1}{4} \left(-d \left(a_3^2+b_3^2-4 d^2+y^2+z^2\right)+a_3 y z S-b_3 y z C\right)=0.$$

The value $ C \ne 0$ since $ \theta \in (0, \pi)$. We previously saw that $xyz \ne 0$. Therefore subtracting the second equation from the first yields
$b_1=0$. Subtracting the eighth equation from the seventh yields $b_3=0$. We now know our matrix $A$ is real with $x>0$, $y>0$, $z>0$ and $a_2 \ne 0$. In addition, the principal 2-by-2 determinants of $H$ are all non-negative so \begin{equation}\label{2d} 0< x,y,z \leq 2d\end{equation} and $|a_j| \leq 2d$ for $j=1,2,3.$ We are left with four distinct equations from the list above:

\begin{equation}\label{1of6}
d \left(a_1^2-4 d^2+x^2+y^2\right)-a_1 x y S=0,
\end{equation}

\begin{equation}\label{2of6}
d \left(a_2^2+a_3^2-4 d^2+x^2\right)-a_2 a_3 x S =0,
\end{equation}

\begin{equation}\label{3of6}
d \left(a_1^2+a_2^2-4 d^2+z^2\right)-a_1 a_2 z S =0,
\end{equation}

\begin{equation}\label{4of6}
d \left(a_3^2-4 d^2+y^2+z^2\right)-a_3 y z S=0.
\end{equation}

In addition, we still have the equations from the homogeneous polynomial coefficients, which include:

\begin{equation}\label{5of6}
d^4=c_4=\frac{1}{16} (-a_1 a_3 + a_2 y + x z)^2.
\end{equation}

Also, $a_2 x y z=4(c_4-c_1)=4 \left( \frac{d}{2S}c_3\right)$, so 

\begin{equation}\label{6of6}
a_2 x y z= \frac{d}{2S} \left( a_1 a_2 z+a_1 x y+a_2 a_3 x+a_3 y z\right).
\end{equation}

Subtracting \eqref{3of6} from \eqref{1of6}, we get

$$d(x^2+y^2-a_2^2-z^2)-a_1(xy-a_2z) S=0.$$
If $xy-a_2z=0$ then also $a_2^2+z^2=x^2+y^2$ and we conclude $a_2=x$ and $z=y$ or $a_2=y$ and $z=x$; other possibilities are prevented since $x,y$ and $z$ are positive. 
If $xy-a_2 z \ne 0$, then

\begin{equation}\label{a1}
a_1=\frac{d(x^2+y^2-a_2^2-z^2)}{(xy-a_2z)S}.
\end{equation}

Likewise, equations \eqref{2of6} and \eqref{4of6} show that if $a_2x-yz=0$ then $a_2=y$ and $z=x$ or $a_2=z$ and $y=x$; otherwise

\begin{equation}\label{a3}
a_3=\frac{d(a_2^2+x^2-y^2-z^2)}{(xa_2-yz)S}.
\end{equation}

In order to prove $A$ has the form stated in the theorem, we must show $a_2=y$ and $x=z$. Once that is established, the values for $\delta_1$ and $\delta_2$ are simply the roots of the quadratic in $a_1$ or $a_3$ given by any of \eqref{1of6} through \eqref{4of6}. Therefore we need only worry about the cases above where

$$(i) \, a_2=x \text{ and } z=y,  \text{ or } (ii) \, xy -a_2 z \ne 0,$$

and

$$  (iii) \, a_2=z \text{ and } y=x,  \text{ or } (iv) \,  x a_2-y z \ne 0. $$.

If (i) and (iii) hold, we obtain $a_2=y$ and $z=x$ as desired. If (i) and (iv) hold then 

$$a_3=\frac{2d(x^2-y^2)}{S(x^2-y^2)},$$
contradicting $|a_3| \leq 2d$. Assuming (ii) and (iii) hold contradicts $|a_1| \leq 2d$. So we need only consider the possibility that  $a_2x-yz \ne 0$ and $xy-a_2z \ne 0$. In that case, substitute expression \eqref{a1} for $a_1$ into equation \eqref{1of6}. After dividing by $d$, finding a common denominator and replacing the fraction with its numerator, we obtain the equation $s_1 d^2+t_1=0$ where

$$s_1=(x^2+y^2-a_2^2-z^2)^2-4(xy-a_2z)^2 S^2$$
and

$$t_1=(xy-a_2z)  S^2 \left( (a_2^2+z^2) xy-(x^2+y^2)a_2z \right)= S^2 (xy-a_2z)(xz-a_2y)(-a_2x+yz).$$

Repeating this procedure after substituting expression \eqref{a3} for $a_3$ into \eqref{2of6} yields $s_2 d^2+ t_2=0$ where

$$s_2=(x^2+a_2^2-y^2-z^2)^2-4(xa_2-yz)^2 S^2$$
and

$$t_2=(xa_2-yz)  S^2 \left( (y^2+z^2) xa_2-(x^2+a_2^2)yz \right)=S^2(xa_2-yz)(xz-a_2y)(a_2z-yx) .$$

Since $t_1=t_2$ and $d^2 \ne 0$, subtracting  $s_2 d^2+ t_2=0$ from $s_1 d^2+ t_1=0$ shows that $s_1=s_2$. Thus 

$$(x^2+y^2-a_2^2-z^2)^2-4(xy-a_2z)^2 S^2=(x^2+a_2^2-y^2-z^2)^2-4(xa_2-yz)^2 S^2.$$
After expanding and cancelling, this expression becomes

$$-4(a_2-y)(a_2+y)(x-z)(x+z)C^2=0,$$ which means $x=z$ or $y= \pm a_2$.

We will show that this leads to a contradiction, implying that our original assumption that $a_2 x=yz \ne 0$ and $xy-a_2z \ne 0$ is false. Therefore, the only remaining possibility is $x=z$ and $a_2=y$. 

First assume $a_2=-y$. Then \eqref{a1} and \eqref{a3} show that

$$a_1=\frac{d(x^2-z^2)}{y(x+z)S}=\frac{d(x-z)}{ yS}=-a_3.$$

So \eqref{6of6} shows

$$0>-y^2xz=\frac{da_1y(2x-2z)}{2S}=a_1y^2d\frac{(x-z)}{yS}=a_1^2y^2,$$
a contradiction.

Next assume $x=z$. Then  \eqref{a1} and \eqref{a3} show that

$$a_1=\frac{d(y^2-a_2^2)}{x(y-a_2)S}=\frac{d(y+a_2)}{ xS}=\frac{d(a_2^2-y^2)}{x(a_2-y)}=a_3.$$

So \eqref{6of6} shows

$$a_2yx^2=\frac{da_1x(y+a_2)}{S}=a_1x^2d\frac{(y+a_2)}{xS}=a_1^2x^2.$$
Thus $a_2y=a_1^2$, so \eqref{5of6} implies $x^4/16=d^4$. Therefore $z=x=2d$. 

Also, $x a_1S=d(y+a_2)$ shows that $2a_1S=y+a_2$. Squaring both sides of this equation yields 

$$4S^2 a_2y=y^2+2a_2y+a_2^2,$$
and since $a_2 y \geq 0$, we conclude that

$$4 a_2y>y^2+2a_2y+a_2^2,$$ 
also a contradiction. 

Finally, assume $y=a_2$ in which case \eqref{a1} and \eqref{a3} show 

$$a_1=\frac{d(x+z)}{yS}=a_3.$$
Now \eqref{6of6} shows $y^2xz=\frac{da_1y(x+z)}{2S}=y^2a_1^2.$ Therefore, $a_1^2=xz$, which combines with \eqref{5of6} to imply $y=2d$. Similarly to the previous case, the identity $2a_1dS=d(x+z)$ leads to the contradiction 

$$4xz> x^2+2xz+z^2.$$

In conclusion, the only possibility is $x=z$ and $y=a_2$. Now to determine the values of $a_1$ and $a_3$, substitute $x=z$ and $a_2=y$ back into  \eqref{1of6} through \eqref{4of6}. We obtain two identical quadratics in $a_1$ and $a_3$, so they are roots of the same quadratic in $t$, namely

\begin{equation}\label{quadratic}
 dt^2-x y St +d(x^2+y^2-4d^2)=0.
 \end{equation}

 In order to guarantee $a_1$ and $a_3$ are real, the discriminant of \eqref{quadratic} must be nonnegative. This means 
 
 $$x^2y^2 S^2 -4d^2(x^2+y^2-4d^2) \geq 0.$$
 Hence
 
 $$y^2(x^2 S^2 -4d^2) \geq 4d^2(x^2-4d^2),$$
 and inequality \eqref{2d} shows that 
 
 $$y^2 \leq \frac{ 4d^2(4d^2-x^2)}{4d^2-x^2 S^2}$$
 from which the inequality \eqref{yrange} follows. Note that the inequalities above also show that $x=2d$ is impossible, yielding $x \in (0, 2d)$.
 
 Now the possible values for $a_1$ and $a_3$ are the roots of \eqref{quadratic}, which are the values for $\delta_1$ and $\delta_2$ given in the theorem. In fact, $a_1$ and $a_3$ must be distinct roots of the quadratic if there are two distinct roots because if $a_1=a_3$, then \eqref{6of6} with $x=z$ and $y=a_2$ implies $a_1=a_3=\frac{xy S}{2d}$. This concludes the proof that the form of the matrix $A$ in the theorem is necessary.

 Sufficiency. Assume that $A$ has the form \eqref{Aform}. Without loss of generality, set $t=0$; since $A$ is real, its numerical range will be symmetric about the real axis. Any other line of symmetry will be obtained by rotating through the appropriate angle $t$. The NR generating polynomial associated with $A$ is 
 
 $$p_A(u,v,w)=\det \left[
\begin{array}{cccc}
 w & \frac{u x}{2}-\frac{i v x}{2} & \frac{u \delta _1}{2}-\frac{ i v \delta _1}{2} & \frac{u y}{2}-\frac{i v y}{2} \\
 \frac{u x}{2}+\frac{i v x}{2} & w & \frac{u y}{2}-\frac{i v y}{2} & \frac{u \delta _2}{2}-\frac{i v \delta _2}{2}  \\
 \frac{u \delta _1}{2}+\frac{ i v \delta _1}{2} & \frac{u y}{2}+\frac{i v y}{2} & w & \frac{u x}{2}-\frac{i v x}{2} \\
 \frac{u y}{2}+\frac{i v y}{2} & \frac{u \delta _2}{2}+\frac{ i v \delta _2}{2} & \frac{u x}{2}+\frac{i v x}{2} & w \\
\end{array}
\right],$$
which simplifies to

\begin{align*}p_A(u,v,w)&=w^4+ \left( d^4-\frac{x^2 y^2}{4}\right) u^4 + \left( \frac{x^2 y^2 S}{2 d}\right) u^3w+  \left(2d^4-\frac{ x^2 y^2}{4}\right) u^2v^2 \\
&+ \left( -2d^2 -\frac{x^2 y^2  S^2}{4 d^2}\right) u^2w^2+ \left( \frac{x^2 y^2 S}{2 d} \right) uv^2w+  \left( -2d^2 -\frac{x^2 y^2  S^2}{4 d^2}\right)v^2w^2 +d^4v^4.
\end{align*}

The first order partial derivatives of $p_A$ are:

\begin{align*} \frac{\partial p_A}{\partial u}(u,v,w)&=4u^3  \left( d^4-\frac{x^2 y^2}{4}\right) +2uv^2 \left(2 d^4-\frac{x^2 y^2}{4}\right)+ (3u^2w +v^2w)  \left( \frac{x^2 y^2 S}{2 d}\right) \\
&+2uw^2  \left( -2d^2 -\frac{x^2 y^2 S^2}{4 d^2}\right), \\
 \frac{\partial p_A}{\partial v}(u,v,w)&=4d^4v^3 +2u^2v  \left(2d^4-\frac{ x^2 y^2}{4}\right) +2vw^2  \left( -2d^2 -\frac{x^2 y^2  S^2}{4 d^2}\right) +2uvw \left( \frac{x^2 y^2 S}{2 d} \right), \\
 \end{align*}

and
\begin{align*}
\frac{\partial p_A}{\partial w}(u,v,w)&=4w^3 + \left( \frac{x^2 y^2 S}{2 d}\right) (u^3+ uv^2)+  \left( -2d^2 -\frac{x^2 y^2 S^2}{4 d^2}\right)( 2wv^2+2wu^2).
\end{align*}
 
It is straightforward to verify that if $(u_0,v_0)=\left( \frac{S}{d},  \frac{C}{d} \right)$ then 

\begin{equation*}
\left( \frac{\partial p_A}{\partial u}(u_0, \pm v,_01),\frac{\partial p_A}{\partial v}(u_0,\pm v_0,1),\frac{\partial p_A}{\partial w}(u_0,\pm v_0,1)\right)
=(0,0,0).
\end{equation*}
The discussion at the beginning of Section~3 now shows that if these singularities correspond to flat portions on the boundary of $W(A)$, then the distance from each line containing a flat portion to the origin is $1/\sqrt{u_0^2+v_0^2}=d$. If $(u_0, v_0)=\frac{1}{d}(S,C)$ then the slope of the corresponding line $\ell_{(u_0,v_0)}$ is $-\tan \left( \frac{\theta}{2} \right)$ and the slope of $\ell_{(u_0,-v_0)}$  is $\tan \left( \frac{\theta}{2} \right)$. These lines intersect on the negative real axis at the point $(-d \csc(\theta/2),0 )$ and the angle between them is $\theta$. Therefore, it only remains to show that the singularities $(u_0, \pm v_0)$ correspond to distinct double tangent lines and the matrix $A$ will have the numerical range asserted in the theorem.  Thus we will verify that the remaining conditions in Proposition \ref{flattest} hold.

 Since $A$ is real, the numerical range is symmetric about the real axis so it suffices to check the behavior of the point $(u_0,v_0,1)$. The second order partial derivatives of $p_A$ are:

\begin{align*}
\frac{\partial^2 p_A}{\partial^2 u} (u,v,w)&=12u^2 \left( d^4-\frac{x^2 y^2}{4}\right) +2v^2 \left(2 d^4-\frac{x^2 y^2}{4}\right)+6uw\left( \frac{x^2 y^2 S}{2 d}\right) +2w^2\left( -2d^2 -\frac{x^2 y^2S^2}{4 d^2}\right),\\
\frac{\partial^2 p_A}{\partial^2 v} (u,v,w)&=12d^4 v^2+2u^2 \left(2 d^4-\frac{x^2 y^2}{4}\right)+2uw\left( \frac{x^2 y^2 S}{2 d}\right) +2w^2\left( -2d^2 -\frac{x^2 y^2  S^2}{4 d^2}\right),\\
\frac{\partial^2 p_A}{\partial v \partial u} (u,v,w)&=4uv  \left(2 d^4-\frac{x^2 y^2}{4}\right) +2vw\left( \frac{x^2 y^2 S}{2 d}\right) .
 \end{align*}
 
When $(u,v)=(u_0,v_0)=\left( \frac{S}{d},  \frac{C}{d} \right)$  these values reduce to 
 \begin{equation}\label{414}\begin{aligned}
a_{11}=\frac{\partial^2 p_A}{\partial^2 u} (u_0,v_0,1)&=8d^2 S^2- \frac{ x^2y^2}{2d^2},\\
a_{22}=\frac{\partial^2 p_A}{\partial^2 v} (u_0,v_0,1)&=8d^2 C^2,\\
a_{12}=\frac{\partial^2 p_A}{\partial v \partial u} (u_0,v_0,1)&=8 d^2 S C.
\end{aligned}\end{equation}
Note that $a_{22}=8d^2C^2 \ne 0$ and $a_{12}^2-a_{11}a_{22}=4x^2y^2C^2>0$. In addition, 
$$ a_{11} u_0^2+2a_{12}u_0v_0+a_{22}v_0^2=8-\frac{S^2x^2y^2}{2d^4},$$
which is nonzero due to the bounds on $x$ and $y$ given in the theorem. Therefore, condition \eqref{distinct} in Proposition \ref{flattest} is satisfied. 
Next we will check condition \eqref{extremeline}.

$$p_A(u_0,v_0,\gamma)=\gamma^4+ \gamma^2 \left( -\frac{S^2x^2y^2}{4d^4}-2\right)+ \gamma \left( \frac{S^2x^2y^2}{2d^4}\right)+ (S^2+C^2)^2-\frac{S^2x^2y^2}{4d^4},$$
so
$$p_A(u_0,v_0,\gamma)=(\gamma^2-1)^2-(\gamma-1)^2\frac{S^2x^2y^2}{4d^4}=(\gamma-1)^2 \left( (\gamma+1)^2-\frac{S^2x^2y^2}{4d^4} \right).$$

Assume $p_A(u_0,v_0,\gamma)=0$ for $\gamma \ne 1$. Since $x^2 \leq 4d^2$ and $y^2 \leq 4 d^2$, it follows that $\frac{S^2x^2y^2}{4d^4}  \leq 4 S^2<4$ whence $(\gamma+1)<2$; thus \eqref{extremeline} holds and there is a flat portion on $\ell_{(u_0,v_0)}$. 

Finally, if we substitute the values of the second order partial derivatives and $(u_0,v_0)$ into the formula \eqref{lengthofflat} we obtain that the length of the flat portion is the value of $L$ in the statement of the theorem. 
\cvd 
 
 \section{Examples of nonparallel flat portions}\label{examples}
 \setcounter{equation}{0}
In this section, we will discuss two families of examples exhibiting every possible angle between flat portions.

The first family has a particularly simple form with one parameter. Fix $k \in (0, \sqrt{2})$. Define the 4-by-4 nilpotent matrix $A_k$ by

\begin{equation*}\label{Ak}
A_k =
\begin{bmatrix}
	0 & 1 & 0 & 1 \\
	0 & 0 & 1 & k \\
	0 & 0 & 0 & 1 \\
	0 & 0 & 0 & 0
\end{bmatrix}.
\end{equation*}

Note that $A_k$ satisfies the conditions of Theorem \ref{allangles} with $d=\frac{1}{\sqrt{2}}$, $x=y=1$, and $\theta=2 \arcsin(k/\sqrt{2})$. As $k$ ranges over the interval $(0, \sqrt{2})$, the angle $\theta$ covers all possible angles in the interval $(0, \pi)$. When $k=0$, the resulting numerical range has two horizontal parallel flat portions, so the angle between them is $\arcsin(0)$; when $k=\sqrt{2}$, the generating curve has a singularity that corresponds to a repeated tangent line instead of distinct tangent lines, so there is no flat portion on the boundary of the numerical range.

\begin{example} If $k=\sqrt{1 - \sqrt{3}/2}$ then  the angle $\theta$ between the flat portions is $\frac{\pi}{6}$.

\begin{center}
\includegraphics[scale=.35]{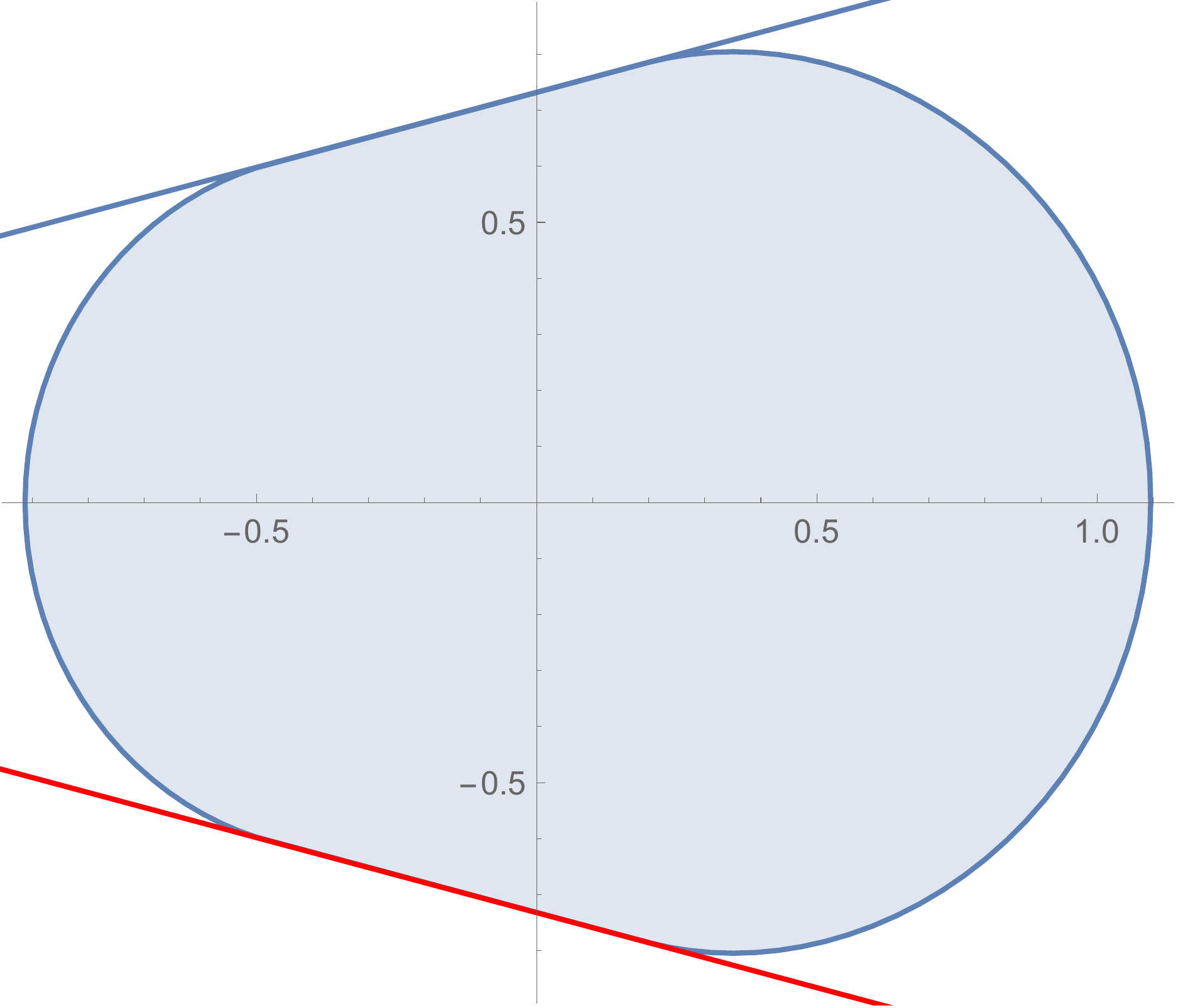}
\end{center}

\end{example}

\begin{example} If $k=\sqrt{\sqrt{\frac{\sqrt{5}}{8}+\frac{5}{8}}+1}$ then  the angle $\theta$ between the flat portions is $\frac{9 \pi}{10}$.

\begin{center}
\includegraphics[scale=.35]{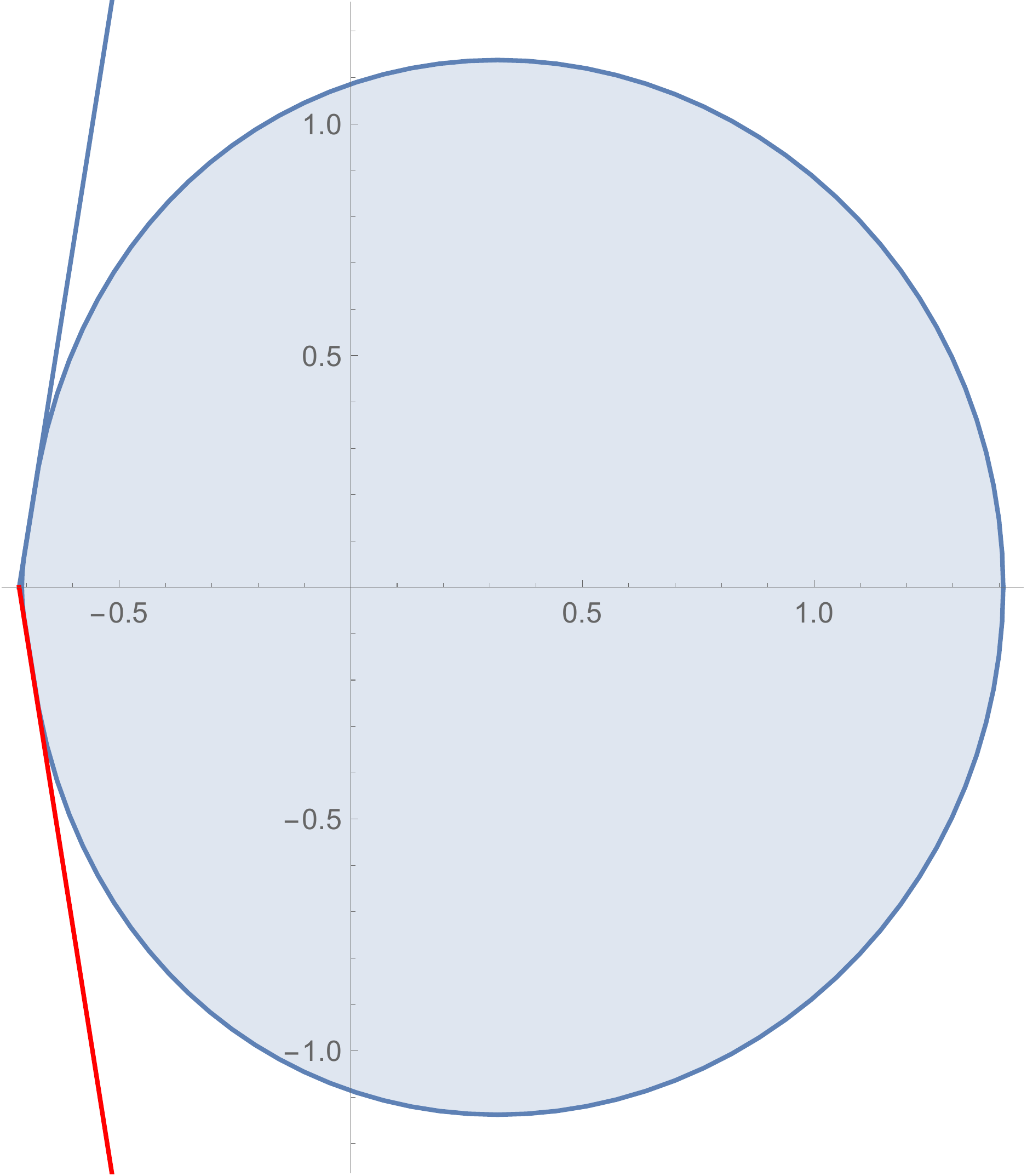}
\end{center}

\end{example}

Note that none of the $A_k$ matrices have numerical ranges where the flat portions attain the maximum length, because as Remark \ref{maxlength} describes, if the length is maximal and $d=\frac{1}{\sqrt{2}}$ then using the previous notation \eqref{SC}, $$x^2=y^2=4d^2/(1+C)=2/(1+C)>1.$$

We next define a two-parameter family of matrices with numerical ranges that exhibit flat portions of maximal length. Let $d>0$ and $\theta \in (0, \pi)$. 
Use the values of $x$ and $y$ in Remark \ref{maxlength} to show that for those values $\delta_1=\delta_2=\frac{2d S}{1+C}$, and define

\begin{equation*}\label{Mdtheta}
M_{d,\theta} =
\frac{2d}{\sqrt{1+C}}\begin{bmatrix}
	0 & 1 & \frac{S}{\sqrt{1+C}}& 1 \\
	0 & 0 & 1 &  \frac{S}{\sqrt{1+C}}\\
	0 & 0 & 0 & 1 \\
	0 & 0 & 0 & 0
\end{bmatrix}.
\end{equation*}

For each $d>0$ and $\theta \in (0,\pi)$, $W(M_{d,\theta})$ will have two flat portions of maximal length $L_{\mathrm{max}}=d$ on lines that are a distance $d$ from the origin and that meet at angle $\theta$. 

The values from \eqref{414} with $x^2y^2=\frac{16d^4}{(1+C)^2}$ can be substituted into \eqref{pointsoncurve} to find endpoints of the flat portion correspoding to the singularity $(S/d,C/d,1)$. The only change required for the values corresponding to $(S/d,-C/d,1)$ is $a_{12}=-8d^2SC$. 
\begin{example} The numerical range of $M_{1,\frac{2\pi}{3}}$ is plotted below. Note $L_{\mathrm{max}}=1$ is the length of each flat portion, and the purple dots denote the endpoints of these flat portions. 
\begin{center}
\includegraphics[scale=.35]{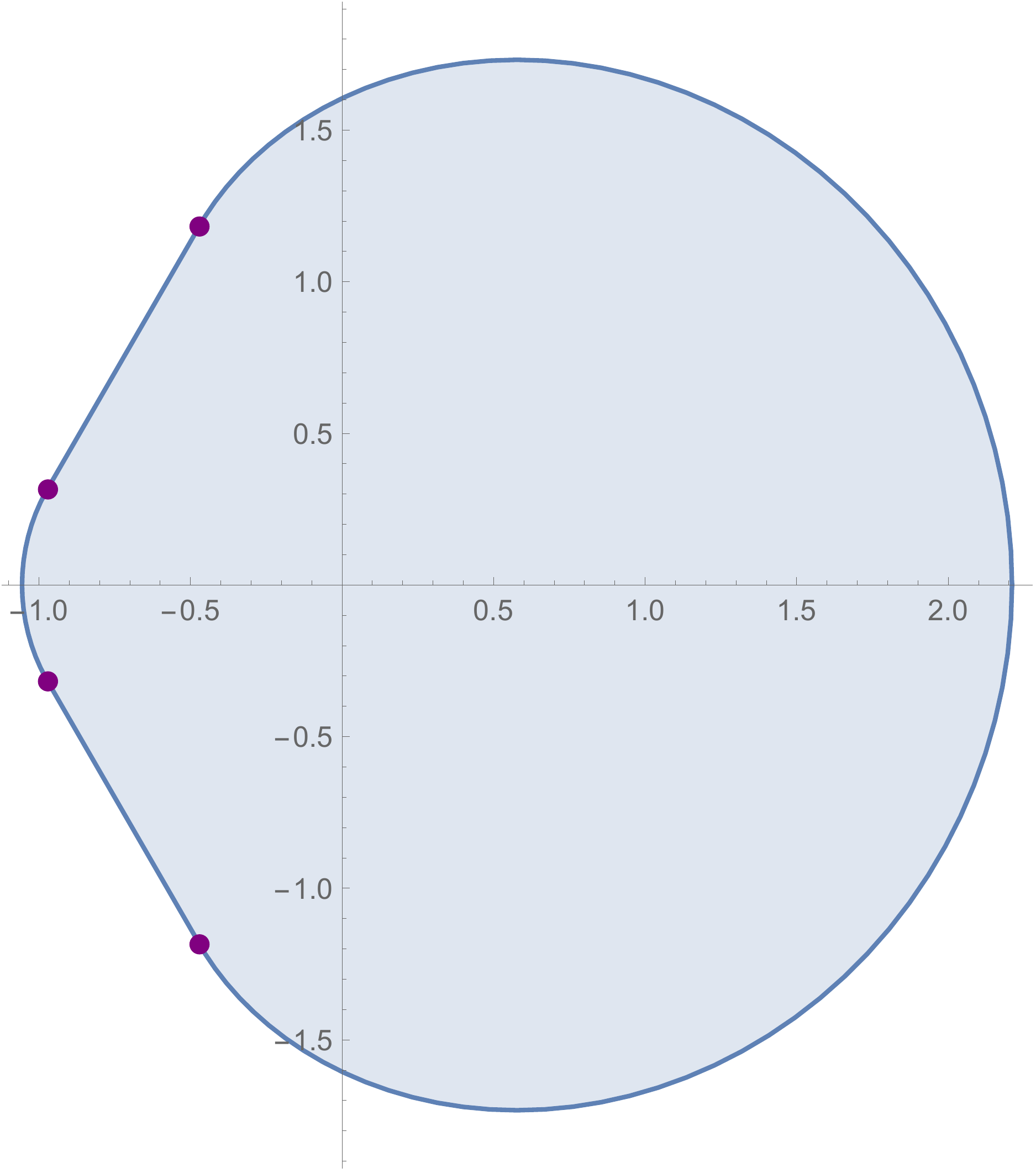}
\end{center}
\end{example}

\end{document}